\newtheorem{theorem}{Theorem}
\theoremstyle{plain}
\newtheorem{corollary}{Corollary}
\newtheorem{definition}{Definition}
\newtheorem{example}{Example}
\newtheorem{lemma}{Lemma}
\newtheorem{proposition}{Proposition}
\newtheorem{remark}{Remark}
\numberwithin{equation}{section}
\begin{document}
\title[Harmonically $(\alpha ,m)$-Convex Functions]{Hermite-Hadamard type
inequalities for harmonically $(\alpha ,m)$-convex functions}
\author{\.{I}mdat \.{I}\c{s}can}
\address{Department of Mathematics, Faculty of Arts and Sciences,\\
Giresun University, 28100, Giresun, Turkey.}
\email{imdati@yahoo.com, imdat.iscan@giresun.edu.tr}
\subjclass[2000]{Primary 26D15; Secondary 26A51}
\keywords{harmonically $(\alpha ,m)$-convex function, Hermite-Hadamard type
inequalities, hypergeometric function}

\begin{abstract}
The author introduces the concept of harmonically $(\alpha ,m)$-convex
functions and establishes some Hermite-Hadamard type inequalities of these
classes of functions.
\end{abstract}

\maketitle

\section{Introduction}

Let $f:I\subseteq \mathbb{R\rightarrow R}$ be a convex function defined on
the interval $I$ of real numbers and $a,b\in I$ with $a<b$. The following
double inequality is well known in the literature as Hermite-Hadamard
integral inequality 
\begin{equation}
f\left( \frac{a+b}{2}\right) \leq \frac{1}{b-a}\int\limits_{a}^{b}f(x)dx\leq 
\frac{f(a)+f(b)}{2}\text{.}  \label{1-1}
\end{equation}

The class of $(\alpha ,m)$-convex functions was first introduced In \cite%
{M93}, and it is defined as follows:

\begin{definition}
The function $f:\left[ 0,b\right] \mathbb{\rightarrow R}$, $b>0$, is said to
be $(\alpha ,m)$-convex where $(\alpha ,m)\in \left[ 0,1\right] ^{2}$, if we
have%
\begin{equation*}
f\left( tx+m(1-t)y\right) \leq t^{\alpha }f(x)+m(1-t^{\alpha })f(y)
\end{equation*}%
for all $x,y\in \left[ 0,b\right] $ and $t\in \left[ 0,1\right] $.
\end{definition}

It can be easily that for $(\alpha ,m)\in \left\{ (0,0),(\alpha
,0),(1,0),(1,m),(1,1),(\alpha ,1)\right\} $ one obtains the following
classes of functions: increasing, $\alpha $-starshaped, starshaped, $m$%
-convex, convex, $\alpha $-convex.

Denote by $K_{m}^{\alpha }(b)$ the set of all $(\alpha ,m)$-convex functions
on $\left[ 0,b\right] $ for which $f(0)\leq 0$. For recent results and
generalizations concerning $(\alpha ,m)$-convex functions (see \cite%
{BOP08,I13,I13b,I13c,M93,OAK11,OKS10,OSS11,SSOR09} ).

In \cite{I13d}, the author gave definition of harmonically convex functions
and established some Hermite-Hadamard type inequalities for harmonically
convex functions as follows:

\begin{definition}
Let $I\subset 
%TCIMACRO{\U{211d} }%
%BeginExpansion
\mathbb{R}
%EndExpansion
\backslash \left\{ 0\right\} $ be a real interval. A function $%
f:I\rightarrow 
%TCIMACRO{\U{211d} }%
%BeginExpansion
\mathbb{R}
%EndExpansion
$ is said to be harmonically convex, if \ 
\begin{equation}
f\left( \frac{xy}{tx+(1-t)y}\right) \leq tf(y)+(1-t)f(x)  \label{1-3}
\end{equation}%
for all $x,y\in I$ and $t\in \lbrack 0,1]$. If the inequality in (\ref{1-3})
is reversed, then $f$ is said to be harmonically concave.
\end{definition}

\begin{theorem}
Let $f:I\subset 
%TCIMACRO{\U{211d} }%
%BeginExpansion
\mathbb{R}
%EndExpansion
\backslash \left\{ 0\right\} \rightarrow 
%TCIMACRO{\U{211d} }%
%BeginExpansion
\mathbb{R}
%EndExpansion
$ be a harmonically convex function and $a,b\in I$ with $a<b.$ If $f\in
L[a,b]$ then the following inequalities hold 
\begin{equation}
f\left( \frac{2ab}{a+b}\right) \leq \frac{ab}{b-a}\int\limits_{a}^{b}\frac{%
f(x)}{x^{2}}dx\leq \frac{f(a)+f(b)}{2}.  \label{1-4}
\end{equation}%
The \ above inequalities are sharp.
\end{theorem}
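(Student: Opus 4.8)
The plan is to reduce both halves of \eqref{1-4} to a single integral identity obtained through the change of variables $x=\frac{ab}{ta+(1-t)b}$, which is the substitution naturally attached to harmonic convexity. First I would check that this map sends $t\in[0,1]$ bijectively onto $[a,b]$ (with $x=a$ at $t=0$ and $x=b$ at $t=1$) and compute $dx=\frac{ab(b-a)}{(ta+(1-t)b)^{2}}\,dt$. Since $\frac{1}{x^{2}}=\frac{(ta+(1-t)b)^{2}}{(ab)^{2}}$, the weight $x^{-2}$ exactly cancels the Jacobian, yielding
\begin{equation*}
\frac{ab}{b-a}\int_{a}^{b}\frac{f(x)}{x^{2}}\,dx=\int_{0}^{1}f\!\left(\frac{ab}{ta+(1-t)b}\right)dt .
\end{equation*}
All the remaining work then takes place on this right-hand integral.

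For the right inequality I would apply the defining inequality \eqref{1-3} directly with $x=a$ and $y=b$, which gives $f\!\left(\frac{ab}{ta+(1-t)b}\right)\le tf(b)+(1-t)f(a)$ for each $t\in[0,1]$. Integrating over $[0,1]$ and using $\int_{0}^{1}t\,dt=\int_{0}^{1}(1-t)\,dt=\tfrac12$ produces the upper bound $\frac{f(a)+f(b)}{2}$.

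For the left (midpoint) inequality I would use a symmetrization trick. Setting $u=\frac{ab}{ta+(1-t)b}$ and $v=\frac{ab}{(1-t)a+tb}$, a short computation shows $\frac1u+\frac1v=\frac{a+b}{ab}$, hence $\frac{2uv}{u+v}=\frac{2ab}{a+b}$ independently of $t$. Applying \eqref{1-3} with these $u,v$ and $t=\tfrac12$ gives $f\!\left(\frac{2ab}{a+b}\right)\le\frac12\bigl(f(u)+f(v)\bigr)$. Integrating over $t\in[0,1]$ and observing that the substitution $t\mapsto 1-t$ turns $\int_{0}^{1}f(v)\,dt$ into $\int_{0}^{1}f(u)\,dt$, both integrals coincide with the one computed above, which delivers the lower bound.

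The routine but essential point — the step I would handle most carefully — is the Jacobian cancellation that collapses the weighted measure $x^{-2}\,dx$ into ordinary Lebesgue measure in $t$; once that identity is in place, everything reduces to a direct application of \eqref{1-3}. Sharpness can be exhibited by verifying that equality holds throughout \eqref{1-4} for a constant function, so that neither bound can be improved in general.
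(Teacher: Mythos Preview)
Your argument is correct in every step: the substitution $x=\frac{ab}{ta+(1-t)b}$ does collapse the weighted integral to $\int_{0}^{1}f\bigl(\frac{ab}{ta+(1-t)b}\bigr)\,dt$, the right-hand bound follows from a direct application of \eqref{1-3}, and the symmetrization via $u,v$ with $\frac{1}{u}+\frac{1}{v}=\frac{a+b}{ab}$ cleanly yields the midpoint bound.

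There is, however, nothing in the present paper to compare against: this theorem is quoted in the introduction as a known result from the author's earlier work \cite{I13d} and is not proved here. The nearest analogue is the proof of Theorem~\ref{2.2}, which establishes the $(\alpha,m)$-generalization of the \emph{right-hand} inequality only; that proof proceeds exactly as yours does for the upper bound---apply the defining convexity inequality pointwise in $t$, integrate over $[0,1]$, and identify the resulting integral with $\frac{ab}{b-a}\int_{a}^{b}\frac{f(x)}{x^{2}}\,dx$ via the same change of variables. The midpoint inequality is not treated anywhere in this paper, so your symmetrization argument for the lower bound has no counterpart here to compare with.
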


\begin{theorem}
\label{1.5}Let $f:I\subset \left( 0,\infty \right) \rightarrow 
%TCIMACRO{\U{211d} }%
%BeginExpansion
\mathbb{R}
%EndExpansion
$ be a differentiable function on $I^{\circ }$, $a,b\in I$ with $a<b,$ and $%
f^{\prime }\in L[a,b].$ If $\left\vert f^{\prime }\right\vert ^{q}$ is
harmonically convex on $[a,b]$ for $q\geq 1,$ then%
\begin{eqnarray}
&&\left\vert \frac{f(a)+f(b)}{2}-\frac{ab}{b-a}\int\limits_{a}^{b}\frac{f(x)%
}{x^{2}}dx\right\vert  \label{1-5} \\
&\leq &\frac{ab\left( b-a\right) }{2}\lambda _{1}^{1-\frac{1}{q}}\left[
\lambda _{2}\left\vert f^{\prime }\left( a\right) \right\vert ^{q}+\lambda
_{3}\left\vert f^{\prime }\left( b\right) \right\vert ^{q}\right] ^{\frac{1}{%
q}},  \notag
\end{eqnarray}%
where 
\begin{eqnarray*}
\lambda _{1} &=&\frac{1}{ab}-\frac{2}{\left( b-a\right) ^{2}}\ln \left( 
\frac{\left( a+b\right) ^{2}}{4ab}\right) , \\
\lambda _{2} &=&\frac{-1}{b\left( b-a\right) }+\frac{3a+b}{\left( b-a\right)
^{3}}\ln \left( \frac{\left( a+b\right) ^{2}}{4ab}\right) , \\
\lambda _{3} &=&\frac{1}{a\left( b-a\right) }-\frac{3b+a}{\left( b-a\right)
^{3}}\ln \left( \frac{\left( a+b\right) ^{2}}{4ab}\right) \\
&=&\lambda _{1}-\lambda _{2}.
\end{eqnarray*}
\end{theorem}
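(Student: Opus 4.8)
The plan is to reduce the estimate to an integral identity for $f'$ and then apply the power-mean inequality together with the harmonic convexity of $\left\vert f'\right\vert ^{q}$. First I would record the identity
\begin{equation*}
\frac{f(a)+f(b)}{2}-\frac{ab}{b-a}\int_{a}^{b}\frac{f(x)}{x^{2}}dx=\frac{
ab(b-a)}{2}\int_{0}^{1}\frac{1-2t}{\left( tb+(1-t)a\right) ^{2}}f^{\prime
}\left( \frac{ab}{tb+(1-t)a}\right) dt,
\end{equation*}
which one obtains from the substitution $x=\frac{ab}{tb+(1-t)a}$ (so that $
\frac{ab}{b-a}\int_{a}^{b}\frac{f(x)}{x^{2}}dx=\int_{0}^{1}f\left( \frac{ab}{
tb+(1-t)a}\right) dt$), followed by integration by parts against the kernel $
1-2t$, using that the endpoints $t=1$ and $t=0$ produce $f(a)$ and $f(b)$ respectively.

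Taking absolute values and writing $A_{t}=tb+(1-t)a$ and $h(t)=\frac{
\left\vert 1-2t\right\vert }{A_{t}^{2}}$, I would next bound
\begin{equation*}
\left\vert \frac{f(a)+f(b)}{2}-\frac{ab}{b-a}\int_{a}^{b}\frac{f(x)}{x^{2}}
dx\right\vert \leq \frac{ab(b-a)}{2}\int_{0}^{1}h(t)\left\vert f^{\prime
}\left( \frac{ab}{A_{t}}\right) \right\vert dt,
\end{equation*}
and apply the power-mean inequality (valid since $q\geq 1$) to the nonnegative weight $h$:
\begin{equation*}
\int_{0}^{1}h(t)\left\vert f^{\prime }\left( \frac{ab}{A_{t}}\right)
\right\vert dt\leq \left( \int_{0}^{1}h(t)\,dt\right) ^{1-\frac{1}{q}}\left(
\int_{0}^{1}h(t)\left\vert f^{\prime }\left( \frac{ab}{A_{t}}\right)
\right\vert ^{q}dt\right) ^{\frac{1}{q}}.
\end{equation*}
The final structural step is to use that $\left\vert f^{\prime }\right\vert
^{q}$ is harmonically convex, so that $\left\vert f^{\prime }\left( \frac{ab}{
A_{t}}\right) \right\vert ^{q}\leq t\left\vert f^{\prime }(a)\right\vert
^{q}+(1-t)\left\vert f^{\prime }(b)\right\vert ^{q}$ (matching $x=b$, $y=a$ in the definition of harmonic convexity). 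Substituting this bound identifies the three constants as
\begin{equation*}
\lambda _{1}=\int_{0}^{1}\frac{\left\vert 1-2t\right\vert }{A_{t}^{2}}
dt,\quad \lambda _{2}=\int_{0}^{1}\frac{t\left\vert 1-2t\right\vert }{
A_{t}^{2}}dt,\quad \lambda _{3}=\int_{0}^{1}\frac{(1-t)\left\vert
1-2t\right\vert }{A_{t}^{2}}dt,
\end{equation*}
after which the claimed inequality follows at once; note that $\lambda
_{2}+\lambda _{3}=\lambda _{1}$ explains the stated relation $\lambda
_{3}=\lambda _{1}-\lambda _{2}$.

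The main obstacle is purely computational: evaluating these three weighted integrals in closed form. Because $\left\vert 1-2t\right\vert $ changes sign at $t=\frac{1}{2}$, each integral must be split over $\left[ 0,\frac{1}{2}
\right] $ and $\left[ \frac{1}{2},1\right] $, and after the change of variable $u=A_{t}=a+t(b-a)$ one is left with integrals of type $\int \frac{du
}{u}$ and $\int \frac{du}{u^{2}}$ whose logarithmic and rational pieces must be recombined to produce the terms $\frac{1}{ab}$, $\frac{1}{b(b-a)}$, $
\frac{3a+b}{(b-a)^{3}}$ and the factor $\ln \frac{(a+b)^{2}}{4ab}$. I would compute $\lambda _{1}$ and $\lambda _{2}$ directly and then obtain $\lambda
_{3}$ from $\lambda _{1}-\lambda _{2}$ to save effort; the bookkeeping in $
\lambda _{2}$, which carries the extra factor $t=\frac{u-a}{b-a}$, is where sign and cancellation errors are most likely to arise.
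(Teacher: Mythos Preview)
Your proposal is correct and follows exactly the route the paper uses: the paper does not prove Theorem~\ref{1.5} directly (it is quoted from \cite{I13d}) but recovers it as the special case $\alpha=m=1$ of Theorem~\ref{2.4}, whose proof is precisely Lemma~\ref{1.1} followed by the power-mean inequality with weight $h(t)=\dfrac{|1-2t|}{(tb+(1-t)a)^{2}}$ and then the harmonic-convexity bound on $\left\vert f'\right\vert^{q}$. Your identification of $\lambda_{1},\lambda_{2},\lambda_{3}$ as the three weighted integrals and the relation $\lambda_{3}=\lambda_{1}-\lambda_{2}$ match the paper's setup, and the closed-form evaluations via the substitution $u=a+t(b-a)$ are exactly the intended computation.
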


\begin{theorem}
\label{1.6}Let $f:I\subset \left( 0,\infty \right) \rightarrow 
%TCIMACRO{\U{211d} }%
%BeginExpansion
\mathbb{R}
%EndExpansion
$ be a differentiable function on $I^{\circ }$, $a,b\in I$ with $a<b,$ and $%
f^{\prime }\in L[a,b].$ If $\left\vert f^{\prime }\right\vert ^{q}$ is
harmonically convex on $[a,b]$ for $q>1,\;\frac{1}{p}+\frac{1}{q}=1,$ then%
\begin{eqnarray}
&&\left\vert \frac{f(a)+f(b)}{2}-\frac{ab}{b-a}\int\limits_{a}^{b}\frac{f(x)%
}{x^{2}}dx\right\vert  \label{1-6} \\
&\leq &\frac{ab\left( b-a\right) }{2}\left( \frac{1}{p+1}\right) ^{\frac{1}{p%
}}\left( \mu _{1}\left\vert f^{\prime }\left( a\right) \right\vert ^{q}+\mu
_{2}\left\vert f^{\prime }\left( b\right) \right\vert ^{q}\right) ^{\frac{1}{%
q}},  \notag
\end{eqnarray}%
where%
\begin{eqnarray*}
\mu _{1} &=&\frac{\left[ a^{2-2q}+b^{1-2q}\left[ \left( b-a\right) \left(
1-2q\right) -a\right] \right] }{2\left( b-a\right) ^{2}\left( 1-q\right)
\left( 1-2q\right) }, \\
\mu _{2} &=&\frac{\left[ b^{2-2q}-a^{1-2q}\left[ \left( b-a\right) \left(
1-2q\right) +b\right] \right] }{2\left( b-a\right) ^{2}\left( 1-q\right)
\left( 1-2q\right) }.
\end{eqnarray*}
\end{theorem}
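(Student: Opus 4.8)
The plan is to start from the integral identity that underlies Theorem \ref{1.5}. Writing $A(t)=tb+(1-t)a$ for brevity, the substitution $x=\frac{ab}{A(t)}$ (so that $\frac{1}{x}=\frac{t}{a}+\frac{1-t}{b}$, whence $\frac{dx}{x^{2}}=-\frac{b-a}{ab}\,dt$) turns $\frac{ab}{b-a}\int_{a}^{b}\frac{f(x)}{x^{2}}dx$ into $\int_{0}^{1}f\!\left(\frac{ab}{A(t)}\right)dt$; integrating this by parts against the weight $1-2t$ yields
\begin{equation*}
\frac{f(a)+f(b)}{2}-\frac{ab}{b-a}\int_{a}^{b}\frac{f(x)}{x^{2}}dx=\frac{ab(b-a)}{2}\int_{0}^{1}\frac{1-2t}{A(t)^{2}}f^{\prime }\!\left( \frac{ab}{A(t)}\right) dt.
\end{equation*}
First I would take absolute values on both sides and move the modulus inside the integral, pulling the nonnegative weights $|1-2t|$ and $A(t)^{-2}$ out, to obtain the bound
\begin{equation*}
\left\vert \frac{f(a)+f(b)}{2}-\frac{ab}{b-a}\int_{a}^{b}\frac{f(x)}{x^{2}}dx\right\vert \leq \frac{ab(b-a)}{2}\int_{0}^{1}\frac{|1-2t|}{A(t)^{2}}\left\vert f^{\prime }\!\left( \frac{ab}{A(t)}\right) \right\vert dt.
\end{equation*}

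Next I would apply H\"{o}lder's inequality to the last integral, pairing the factor $|1-2t|$ (raised to the power $p$) with the factor $A(t)^{-2}\left\vert f^{\prime }\!\left( \frac{ab}{A(t)}\right) \right\vert $ (raised to the power $q$). This dominates the integral by the product
\begin{equation*}
\left( \int_{0}^{1}|1-2t|^{p}dt\right) ^{1/p}\left( \int_{0}^{1}\frac{1}{A(t)^{2q}}\left\vert f^{\prime }\!\left( \frac{ab}{A(t)}\right) \right\vert ^{q}dt\right) ^{1/q}.
\end{equation*}
The first factor is elementary: by the symmetry of $|1-2t|$ about $t=\frac{1}{2}$ one finds $\int_{0}^{1}|1-2t|^{p}dt=\frac{1}{p+1}$, which produces the constant $\left( \frac{1}{p+1}\right) ^{1/p}$ appearing in the statement.

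For the second factor I would invoke the harmonic convexity of $\left\vert f^{\prime }\right\vert ^{q}$. Applying the defining inequality (\ref{1-3}) with $x=b$ and $y=a$, so that $\frac{xy}{tx+(1-t)y}=\frac{ab}{A(t)}$, gives $\left\vert f^{\prime }\!\left( \frac{ab}{A(t)}\right) \right\vert ^{q}\leq t\left\vert f^{\prime }(a)\right\vert ^{q}+(1-t)\left\vert f^{\prime }(b)\right\vert ^{q}$, so the inner integral is bounded by
\begin{equation*}
\left\vert f^{\prime }(a)\right\vert ^{q}\int_{0}^{1}\frac{t}{A(t)^{2q}}dt+\left\vert f^{\prime }(b)\right\vert ^{q}\int_{0}^{1}\frac{1-t}{A(t)^{2q}}dt.
\end{equation*}
It then remains to evaluate these two integrals, and this routine but delicate computation is the main obstacle. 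I would substitute $u=A(t)=a+t(b-a)$, reducing them to $\frac{1}{(b-a)^{2}}\int_{a}^{b}(u-a)u^{-2q}du$ and $\frac{1}{(b-a)^{2}}\int_{a}^{b}(b-u)u^{-2q}du$ respectively; integrating the powers $u^{1-2q}$ and $u^{-2q}$ (legitimate since $q>1$ forces $1-q\neq 0$ and $1-2q\neq 0$) and collecting the resulting terms over the common denominator $2(b-a)^{2}(1-q)(1-2q)$ yields exactly $\mu _{1}$ and $\mu _{2}$. Substituting these closed forms back into the H\"{o}lder bound completes the estimate.
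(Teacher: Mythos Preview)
Your proof is correct and follows essentially the same route as the paper: it is precisely the argument behind Lemma~\ref{1.1} combined with H\"{o}lder's inequality and the harmonic-convexity bound, which is exactly how the paper proves the more general Theorem~\ref{2.5} (and then remarks that $\alpha=m=1$ recovers Theorem~\ref{1.6}). Your explicit evaluation of $\int_{0}^{1}tA(t)^{-2q}\,dt$ and $\int_{0}^{1}(1-t)A(t)^{-2q}\,dt$ via the substitution $u=A(t)$ is the elementary counterpart of the paper's hypergeometric expression $\nu(\alpha,q;a,b)$ specialized to $\alpha=1$ and $\alpha=0$.
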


In \cite{I13d}, the author gave the following identity for differentiable
functions.

\begin{lemma}
\label{1.1}Let $f:I\subset 
%TCIMACRO{\U{211d} }%
%BeginExpansion
\mathbb{R}
%EndExpansion
\backslash \left\{ 0\right\} \rightarrow 
%TCIMACRO{\U{211d} }%
%BeginExpansion
\mathbb{R}
%EndExpansion
$ be a differentiable function on $I^{\circ }$ and $a,b\in I$ with $a<b$. If 
$f^{\prime }\in L[a,b]$ then 
\begin{eqnarray*}
&&\frac{f(a)+f(b)}{2}-\frac{ab}{b-a}\int\limits_{a}^{b}\frac{f(x)}{x^{2}}dx
\\
&=&\frac{ab\left( b-a\right) }{2}\int\limits_{0}^{1}\frac{1-2t}{\left(
tb+(1-t)a\right) ^{2}}f^{\prime }\left( \frac{ab}{tb+(1-t)a}\right) dt.
\end{eqnarray*}
\end{lemma}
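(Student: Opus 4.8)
The plan is to verify the identity by starting from the right-hand side and evaluating the single integral through integration by parts, reducing everything to the boundary values $f(a),f(b)$ and to the integral $\int_{a}^{b}f(x)/x^{2}\,dx$ that appears on the left. Write $g(t)=tb+(1-t)a=a+t(b-a)$, so the inner point is $ab/g(t)$, and observe by the chain rule that
\[
\frac{d}{dt}\,f\!\left(\frac{ab}{g(t)}\right)=-\frac{ab(b-a)}{g(t)^{2}}\,f^{\prime}\!\left(\frac{ab}{g(t)}\right).
\]
This is the crucial observation: the awkward factor $1/(tb+(1-t)a)^{2}$ multiplying $f^{\prime}$ in the statement is, up to the constant $-ab(b-a)$, exactly $\tfrac{d}{dt}f(ab/g(t))$. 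Hence the integrand on the right can be rewritten so that $f^{\prime}$ is replaced by a total derivative of a known function of $t$.

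Next I would integrate by parts against the polynomial factor $1-2t$. Setting $h(t)=f(ab/g(t))$, the quantity $g(t)^{-2}f^{\prime}(\cdot)$ becomes $-\tfrac{1}{ab(b-a)}h^{\prime}(t)$, so that
\[
\int_{0}^{1}\frac{1-2t}{g(t)^{2}}f^{\prime}\!\left(\frac{ab}{g(t)}\right)dt=-\frac{1}{ab(b-a)}\int_{0}^{1}(1-2t)\,h^{\prime}(t)\,dt.
\]
Integration by parts turns $\int_{0}^{1}(1-2t)h^{\prime}(t)\,dt$ into $\big[(1-2t)h(t)\big]_{0}^{1}+2\int_{0}^{1}h(t)\,dt$. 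The boundary term evaluates cleanly because $g(0)=a$ and $g(1)=b$ give $h(0)=f(b)$ and $h(1)=f(a)$, whence $\big[(1-2t)h(t)\big]_{0}^{1}=-f(a)-f(b)$.

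It remains to handle $\int_{0}^{1}h(t)\,dt=\int_{0}^{1}f(ab/g(t))\,dt$, which I would compute by the substitution $x=ab/g(t)$. Since $g$ is affine and increasing, $t=0,1$ correspond to $x=b,a$, and from $g(t)=ab/x$ one obtains $dt=-\tfrac{ab}{(b-a)x^{2}}\,dx$; the reversal of limits cancels the minus sign and yields $\int_{0}^{1}f(ab/g(t))\,dt=\tfrac{ab}{b-a}\int_{a}^{b}f(x)/x^{2}\,dx$. Substituting this together with the boundary term, and finally multiplying through by the prefactor $ab(b-a)/2$, collapses the right-hand side to $\tfrac{f(a)+f(b)}{2}-\tfrac{ab}{b-a}\int_{a}^{b}f(x)/x^{2}\,dx$, which is the left-hand side. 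The only real obstacle is bookkeeping: correctly tracking the several constant factors $ab$ and $b-a$ and the sign introduced when the substitution flips the orientation of the limits. The hypotheses that $f$ is differentiable on $I^{\circ}$ and $f^{\prime}\in L[a,b]$ guarantee that the integration by parts and the change of variables are legitimate.
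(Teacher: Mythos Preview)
Your argument is correct: the chain-rule identity $\frac{d}{dt}f(ab/g(t))=-\frac{ab(b-a)}{g(t)^{2}}f'(ab/g(t))$ converts the right-hand side into $-\frac{1}{2}\int_{0}^{1}(1-2t)h'(t)\,dt$, integration by parts yields the boundary term $-f(a)-f(b)$ plus $2\int_{0}^{1}h(t)\,dt$, and the substitution $x=ab/g(t)$ turns that last integral into $\frac{ab}{b-a}\int_{a}^{b}f(x)/x^{2}\,dx$; the constants then assemble exactly into the left-hand side. I checked the signs and the factors of $ab$ and $b-a$ and they all match.

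As for comparison with the paper: there is nothing to compare against. Lemma~\ref{1.1} is quoted from \cite{I13d} and no proof is given in this paper. Your integration-by-parts approach is the standard way such identities are established in this literature, and almost certainly coincides with the original argument in \cite{I13d}.
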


The main purpose of this paper is to introduce the concept of harmonically $%
(\alpha ,m)$-convex functions and establish some new Hermite-Hadamard type
inequalities for these classes of functions.

\section{Main Results}

\begin{definition}
\label{2.1}The function $f:\left( 0,b^{\ast }\right] \rightarrow 
%TCIMACRO{\U{211d} }%
%BeginExpansion
\mathbb{R}
%EndExpansion
,$ $b^{\ast }>0,$ is said to be harmonically $(\alpha ,m)$-convex, where $%
\alpha \in \left[ 0,1\right] $ and $m\in \left( 0,1\right] $, if \ 
\begin{equation}
f\left( \frac{mxy}{mty+(1-t)x}\right) =f\left( \left( \frac{t}{x}+\frac{1-t}{%
my}\right) ^{-1}\right) \leq t^{\alpha }f(x)+m(1-t^{\alpha })f(y)
\label{2-1}
\end{equation}%
for all $x,y\in \left( 0,b^{\ast }\right] $ and $t\in \lbrack 0,1]$. If the
inequality in (\ref{2-1}) is reversed, then $f$ is said to be harmonically $%
(\alpha ,m)$-concave.
\end{definition}

\begin{remark}
When $m=\alpha =1$, the harmonically $(\alpha ,m)$-convex (concave) function
defined in Definition \ref{2.1} becomes a harmonically convex (concave)
function defined in \cite{I13d}. Thus, every harmonically convex (concave)
function is also harmonically $(1,1)$-convex (concave) function.
\end{remark}

The following proposition is obvious.

\begin{proposition}
\label{2.0}Let $f:\left( 0,b^{\ast }\right] \rightarrow 
%TCIMACRO{\U{211d} }%
%BeginExpansion
\mathbb{R}
%EndExpansion
$ be a function.

\begin{itemize}
\item[a)] if $f$ is $(\alpha ,m)$-convex and nondecreasing function then f
is harmonically $(\alpha ,m)$-convex.

\item[b)] if $f$ is harmonically $(\alpha ,m)$-convex and nonincreasing
function then f is $(\alpha ,m)$-convex.
\end{itemize}
\end{proposition}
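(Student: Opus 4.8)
The plan is to connect the two notions through a single elementary comparison between the weighted harmonic and arithmetic means of the positive numbers $x$ and $my$. Writing $z=\frac{mxy}{mty+(1-t)x}=\left( \frac{t}{x}+\frac{1-t}{my}\right) ^{-1}$, I recognize $z$ as the weighted harmonic mean of $x$ and $my$ with weights $t$ and $1-t$, whereas $tx+m(1-t)y=tx+(1-t)(my)$ is the corresponding weighted arithmetic mean. The weighted arithmetic--harmonic mean inequality then furnishes
$$\frac{mxy}{mty+(1-t)x}\leq tx+m(1-t)y$$
for all $x,y\in (0,b^{\ast }]$ and $t\in \lbrack 0,1]$, since $x,my>0$. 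This one inequality, paired with the relevant monotonicity assumption, drives both parts.

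For part a), assuming $f$ is $(\alpha ,m)$-convex and nondecreasing, I would first use $z\leq tx+m(1-t)y$ together with monotonicity to get $f(z)\leq f\left( tx+m(1-t)y\right) $, and then apply the $(\alpha ,m)$-convexity inequality to the arguments $x,y$ with parameter $t$ to bound the right-hand side by $t^{\alpha }f(x)+m(1-t^{\alpha })f(y)$. Chaining the two estimates produces exactly
$$f\left( \frac{mxy}{mty+(1-t)x}\right) \leq t^{\alpha }f(x)+m(1-t^{\alpha })f(y),$$
which is the defining inequality for harmonic $(\alpha ,m)$-convexity.

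For part b), assuming $f$ is harmonically $(\alpha ,m)$-convex and nonincreasing, the same mean inequality $z\leq tx+m(1-t)y$ now yields $f\left( tx+m(1-t)y\right) \leq f(z)$ because $f$ is nonincreasing, while harmonic $(\alpha ,m)$-convexity bounds $f(z)$ above by $t^{\alpha }f(x)+m(1-t^{\alpha })f(y)$. Combining gives $f\left( tx+m(1-t)y\right) \leq t^{\alpha }f(x)+m(1-t^{\alpha })f(y)$, the $(\alpha ,m)$-convexity of $f$.

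The only genuine content lies in the mean comparison; the rest is a two-step chaining. The point I would watch most carefully is the weight bookkeeping: I must confirm that the right-hand side delivered by $(\alpha ,m)$-convexity on the arguments $x$ and $y$, namely $t^{\alpha }f(x)+m(1-t^{\alpha })f(y)$, coincides verbatim with the right-hand side required by the harmonic definition, so that no relabeling of $x$ and $y$ is needed. Since both the arithmetic--harmonic step and the chaining are routine, the proposition is, as the author notes, essentially immediate.
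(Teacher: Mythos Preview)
Your proposal is correct and follows essentially the same route as the paper: both arguments rest on the single comparison $\dfrac{mxy}{mty+(1-t)x}\leq tx+m(1-t)y$, which the paper derives from the identity $t(1-t)(x-my)^{2}\geq 0$ and you recognize as the weighted harmonic--arithmetic mean inequality, after which the monotonicity chaining (left implicit in the paper) completes both parts.
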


\begin{proof}
For all $t\in \left[ 0,1\right] $, $m\in \left( 0,1\right] $ and $x,y\in
\left( 0,b^{\ast }\right] $ we have%
\begin{equation*}
t(1-t)\left( x-my\right) ^{2}\geq 0,
\end{equation*}%
then the following inequality holds%
\begin{equation}
\frac{mxy}{mty+(1-t)x}\leq tx+m(1-t)y.  \label{2-0}
\end{equation}%
By the inequality (\ref{2-0}), the proof is completed.
\end{proof}

\begin{remark}
According to Proposition \ref{2.0}, every nondecreasing $s$-convex function
in the first sense (or $(s,1)$-convex function) is also harmonically $(s,1)$%
-convex function.
\end{remark}

\begin{example}
Let $s\in \left( 0,1\right] $, then the function $f:\left( 0,\infty \right)
\rightarrow 
%TCIMACRO{\U{211d} }%
%BeginExpansion
\mathbb{R}
%EndExpansion
,\ f(x)=x^{s}$ is a nondecreasing $s$-convex function in the first sense 
\cite{HM94}. According to the above Remark $f$ is also harmonically $(s,1)$%
-convex function.
\end{example}

\begin{proposition}
Let $\alpha \in \left[ 0,1\right] $, $m\in \left( 0,1\right] ,$ $f:\left[
a,mb\right] \subset \left( 0,b^{\ast }\right] \rightarrow 
%TCIMACRO{\U{211d} }%
%BeginExpansion
\mathbb{R}
%EndExpansion
$, $b^{\ast }>0,$ be a function and $g:\left[ a,mb\right] \rightarrow \left[
a,mb\right] $, $g\left( x\right) =\frac{mab}{a+mb-x}$, $a<mb$. Then $f$ is
harmonically $(\alpha ,m)$-convex on $\left[ a,mb\right] $ if and only if $%
f\circ g$ is $(\alpha ,m)$-convex on $\left[ a,mb\right] .$
\end{proposition}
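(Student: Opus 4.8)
The plan is to prove the equivalence by direct substitution, using the map $g(x)=\frac{mab}{a+mb-x}$ to translate the harmonic $(\alpha,m)$-convexity inequality into the ordinary $(\alpha,m)$-convexity inequality and back. First I would verify that $g$ is a well-defined bijection of $[a,mb]$ onto itself: since $g(a)=\frac{mab}{mb}=a$ and $g(mb)=\frac{mab}{a}=mb$, and $g$ is continuous and strictly monotone on $[a,mb]$ (its denominator $a+mb-x$ stays positive there), $g$ maps $[a,mb]$ onto $[a,mb]$. This guarantees that ranging over all $x,y\in[a,mb]$ on one side is the same as ranging over all their $g$-preimages on the other side, which is what makes the ``if and only if'' go through cleanly.

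The heart of the argument is an algebraic identity. For $x,y\in[a,mb]$ and $t\in[0,1]$, I would set $u=g^{-1}(x)$ and $v=g^{-1}(y)$ and compute $g\bigl(tu+m(1-t)v\bigr)$, aiming to show it equals the harmonic combination $\left(\frac{t}{x}+\frac{1-t}{my}\right)^{-1}=\frac{mxy}{mty+(1-t)x}$ that appears on the left of \eqref{2-1}. Equivalently, writing everything in terms of $x$ and $y$ via $u=g^{-1}(x)$, $v=g^{-1}(y)$, I expect the key identity
\begin{equation*}
g\left( t\,g^{-1}(x)+m(1-t)\,g^{-1}(y)\right) =\frac{mxy}{mty+(1-t)x}
\end{equation*}
to hold, so that $(f\circ g)\bigl(t\,g^{-1}(x)+m(1-t)\,g^{-1}(y)\bigr)=f\left(\frac{mxy}{mty+(1-t)x}\right)$. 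Once this is established, the $(\alpha,m)$-convexity inequality for $h:=f\circ g$, namely $h(tu+m(1-t)v)\leq t^{\alpha}h(u)+m(1-t^{\alpha})h(v)$, becomes verbatim the defining inequality \eqref{2-1} for $f$ at the points $x=g(u)$, $y=g(v)$, and conversely. Since $\alpha$ and $m$ play no role in the substitution itself (they only appear in the right-hand weights $t^{\alpha}$ and $m(1-t^{\alpha})$, which are untouched by the change of variable), the two statements are literally the same inequality written in different coordinates.

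The main obstacle I anticipate is purely computational: verifying the displayed identity for $g\bigl(t\,g^{-1}(x)+m(1-t)\,g^{-1}(y)\bigr)$. I would first record $g^{-1}(x)=a+mb-\frac{mab}{x}$, substitute into the affine combination $tu+m(1-t)v$, and then apply $g$ and simplify. The factor $m$ multiplying the $(1-t)v$ term is what makes the harmonic argument in \eqref{2-1} come out with the asymmetric denominator $mty+(1-t)x$ rather than a symmetric one, so care is needed to keep the $m$-weights in the right places throughout. Apart from confirming that this rational-function simplification produces exactly $\frac{mxy}{mty+(1-t)x}$, the rest of the proof is immediate, because the equivalence of the two inequalities follows at once from the bijectivity of $g$ together with this single identity.
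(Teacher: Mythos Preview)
Your plan is more ambitious than the paper's: the paper records only the single identity
\[
(f\circ g)\bigl(ta+m(1-t)b\bigr)=f\!\left(\frac{mab}{mtb+(1-t)a}\right),
\]
that is, it checks the substitution only at the particular pair $(u,v)=(a,b)$ that defines $g$, and then declares the rest ``obvious''. You instead aim for the full equivalence by proving the general identity
\[
g\bigl(t\,g^{-1}(x)+m(1-t)\,g^{-1}(y)\bigr)=\frac{mxy}{mty+(1-t)x}\qquad(x,y\in[a,mb]).
\]

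This is where your argument breaks down: the displayed identity is \emph{false} when $m<1$. With $g^{-1}(x)=a+mb-\frac{mab}{x}$ one computes
\[
a+mb-\bigl(t\,g^{-1}(x)+m(1-t)\,g^{-1}(y)\bigr)=(a+mb)(1-t)(1-m)+mab\!\left(\frac{t}{x}+\frac{m(1-t)}{y}\right),
\]
whereas your identity would force this to equal $mab\bigl(\tfrac{t}{x}+\tfrac{1-t}{my}\bigr)$. Equating the two and cancelling $(1-t)(1-m)$ gives the constraint $y=\dfrac{ab(1+m)}{a+mb}$, so for generic $y$ and any $m<1$ the two expressions differ. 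A concrete counterexample: $a=1$, $b=4$, $m=\tfrac12$, $x=2$, $y=\tfrac32$, $t=\tfrac12$ yields $g\bigl(t\,g^{-1}(x)+m(1-t)\,g^{-1}(y)\bigr)=\tfrac{24}{19}$ but $\dfrac{mxy}{mty+(1-t)x}=\tfrac{12}{11}$.

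The source of the failure is structural, not merely computational: a M\"obius map with fixed parameters $a,b$ cannot simultaneously convert the affine weight $m$ on one side into the reciprocal weight $1/m$ on the harmonic side for \emph{all} pairs $(x,y)$. So the ``rational-function simplification'' you anticipate as the main hurdle does not in fact produce $\frac{mxy}{mty+(1-t)x}$, and the bijectivity of $g$ alone does not deliver the if-and-only-if for $m<1$. The paper's one-line proof avoids the issue by restricting to the endpoint pair, where the identity does hold; your attempt exposes that the general step one would naturally expect is not available.
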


\begin{proof}
Since 
\begin{equation}
\left( f\circ g\right) (ta+m(1-t)b)=f\left( \frac{mab}{mtb+(1-t)a}\right) ,
\label{p-1}
\end{equation}%
for all $t\in \left[ 0,1\right] $, $m\in \left( 0,1\right] $. The proof is
obvious from equality (\ref{p-1}).
\end{proof}

The following result of the Hermite-Hadamard type holds.

\begin{theorem}
\label{2.2} Let $f:\left( 0,\infty \right) \rightarrow 
%TCIMACRO{\U{211d} }%
%BeginExpansion
\mathbb{R}
%EndExpansion
$ be a harmonically $(\alpha ,m)$-convex function with $\alpha \in \left[ 0,1%
\right] $ and $m\in \left( 0,1\right] $. If $0<$ $a<b<\infty $ and $f\in
L[a,b]$, then one has the inequality 
\begin{equation}
\frac{ab}{b-a}\int\limits_{a}^{b}\frac{f(x)}{x^{2}}dx\leq \min \left\{ \frac{%
f(a)+\alpha mf(\frac{b}{m})}{\alpha +1},\frac{f(b)+\alpha mf(\frac{a}{m})}{%
\alpha +1}\right\} .  \label{2-2}
\end{equation}
\end{theorem}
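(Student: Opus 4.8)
The plan is to reduce the weighted integral average to an ordinary integral over $[0,1]$ of $f$ evaluated along a harmonic path, and then to apply the defining inequality \eqref{2-1} twice with two symmetric choices of endpoints. First I would record the change-of-variables identity
\[
\frac{ab}{b-a}\int_a^b \frac{f(x)}{x^2}\,dx = \int_0^1 f\!\left(\frac{ab}{tb+(1-t)a}\right)dt,
\]
which follows from the substitution $x = \frac{ab}{tb+(1-t)a}$ (equivalently $u=1/x$): one checks that $\frac{dx}{x^2} = \frac{b-a}{ab}\,dt$ and that the endpoints $x=a,b$ correspond to $t=1,0$. This is the same change of variables underlying Lemma \ref{1.1}, and it converts the weighted integral into a plain average.

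The key observation is that the argument $\frac{ab}{tb+(1-t)a}$ can be matched to the left-hand side of \eqref{2-1} in two ways. Taking $x=a$ and $y=b/m$ (so that $my=b$), the definition gives
\[
f\!\left(\frac{ab}{tb+(1-t)a}\right) = f\!\left(\Big(\tfrac{t}{a}+\tfrac{1-t}{b}\Big)^{-1}\right) \leq t^\alpha f(a) + m(1-t^\alpha)f\!\left(\tfrac{b}{m}\right).
\]
Alternatively, taking $x=b$, $y=a/m$ (so that $my=a$) and running the definition's parameter as $1-t$,
\[
f\!\left(\frac{ab}{tb+(1-t)a}\right) = f\!\left(\Big(\tfrac{1-t}{b}+\tfrac{t}{a}\Big)^{-1}\right) \leq (1-t)^\alpha f(b) + m\big(1-(1-t)^\alpha\big) f\!\left(\tfrac{a}{m}\right).
\]
Both estimates are legitimate because $a/m,\,b/m \in (0,\infty)$ whenever $m\in(0,1]$, so these points lie in the domain of $f$.

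Finally, I would integrate each inequality over $t\in[0,1]$, using the elementary evaluations $\int_0^1 t^\alpha\,dt = \int_0^1 (1-t)^\alpha\,dt = \frac{1}{\alpha+1}$ and hence $\int_0^1 (1-t^\alpha)\,dt = \frac{\alpha}{\alpha+1}$. Combined with the identity from the first step, the first inequality yields
\[
\frac{ab}{b-a}\int_a^b \frac{f(x)}{x^2}\,dx \leq \frac{f(a)+\alpha m f(b/m)}{\alpha+1},
\]
and the second yields the analogous bound with $a$ and $b$ interchanged; taking the smaller of the two produces the minimum claimed in \eqref{2-2}.

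I do not anticipate a serious obstacle. The only point that requires care is choosing the endpoints $y=b/m$ and $y=a/m$ (rather than $b$ and $a$), so that the harmonic mean $\frac{mxy}{mty+(1-t)x}$ collapses exactly to $\frac{ab}{tb+(1-t)a}$, and verifying that the reflection $t\mapsto 1-t$ indeed produces the second, symmetric estimate without altering the left-hand side.
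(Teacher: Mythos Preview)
Your argument is correct and follows essentially the same route as the paper: apply the defining inequality \eqref{2-1} with the two endpoint choices $(x,y)=(a,b/m)$ and $(x,y)=(b,a/m)$, integrate over $t\in[0,1]$, and identify the resulting integral with $\frac{ab}{b-a}\int_a^b f(x)/x^2\,dx$ via the substitution $x=\frac{ab}{tb+(1-t)a}$. The only cosmetic difference is that the paper writes the second estimate for $f\big(\frac{ab}{ta+(1-t)b}\big)$ directly and then observes that its integral coincides with that of $f\big(\frac{ab}{tb+(1-t)a}\big)$, whereas you effect the same symmetry by the reflection $t\mapsto 1-t$.
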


\begin{proof}
Since $f:\left( 0,\infty \right) \rightarrow 
%TCIMACRO{\U{211d} }%
%BeginExpansion
\mathbb{R}
%EndExpansion
$ is a harmonically $(\alpha ,m)$-convex function, we have, for all $x,y\in
I $%
\begin{equation*}
f\left( \frac{xy}{ty+(1-t)x}\right) =f\left( \frac{m\frac{y}{m}x}{tm\frac{y}{%
m}+(1-t)x}\right) \leq t^{\alpha }f(x)+m(1-t^{\alpha })f(\frac{y}{m})
\end{equation*}%
which gives: 
\begin{equation*}
f\left( \frac{ab}{tb+(1-t)a}\right) \leq t^{\alpha }f(a)+m(1-t^{\alpha })f(%
\frac{b}{m})
\end{equation*}%
and%
\begin{equation*}
f\left( \frac{ab}{ta+(1-t)b}\right) \leq t^{\alpha }f(b)+m(1-t^{\alpha })f(%
\frac{a}{m})
\end{equation*}%
for all $t\in \lbrack 0,1].$ Integrating on $[0,1]$ we obtain%
\begin{equation*}
\int\limits_{0}^{1}f\left( \frac{ab}{tb+(1-t)a}\right) dt\leq \frac{%
f(a)+\alpha mf(\frac{b}{m})}{\alpha +1}
\end{equation*}%
and%
\begin{equation*}
\int\limits_{0}^{1}f\left( \frac{ab}{ta+(1-t)b}\right) dt\leq \frac{%
f(b)+\alpha mf(\frac{a}{m})}{\alpha +1}.
\end{equation*}%
However, 
\begin{equation*}
\int\limits_{0}^{1}f\left( \frac{ab}{tb+(1-t)a}\right)
dt=\int\limits_{0}^{1}f\left( \frac{ab}{ta+(1-t)b}\right) dt=\frac{ab}{b-a}%
\int\limits_{a}^{b}\frac{f(x)}{x^{2}}dx
\end{equation*}%
and the inequality (\ref{2-2}) is obtained.
\end{proof}

\begin{remark}
If we take $\alpha =m=1$ in Theorem \ref{2.2}, then inequality (\ref{2-2})
becomes the right-hand side of inequality (\ref{1-4}).
\end{remark}

\begin{corollary}
\label{c.1}If we take $m=1$ in Theorem \ref{2.2}, then we get%
\begin{equation}
\frac{ab}{b-a}\int\limits_{a}^{b}\frac{f(x)}{x^{2}}dx\leq \min \left\{ \frac{%
f(a)+\alpha f(b)}{\alpha +1},\frac{f(b)+\alpha f(a)}{\alpha +1}\right\}
\label{c-1}
\end{equation}
\end{corollary}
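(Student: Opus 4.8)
The plan is to invoke Theorem \ref{2.2} directly with the parameter value $m=1$. Since $m=1$ lies in the admissible range $\left( 0,1\right] $ for the second parameter, a harmonically $(\alpha ,1)$-convex function $f$ satisfies all the hypotheses of Theorem \ref{2.2}, so the conclusion (\ref{2-2}) holds verbatim for such an $f$. Thus no new convexity argument is needed; the entire content of the corollary is the specialization of an inequality already established.

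First I would write out the right-hand side of (\ref{2-2}) with $m=1$ substituted. The only terms affected are the dilated arguments $\frac{b}{m}$ and $\frac{a}{m}$ and the factor $\alpha m$. Setting $m=1$ gives $f\!\left( \frac{b}{m}\right) =f(b)$, $f\!\left( \frac{a}{m}\right) =f(a)$, and $\alpha m=\alpha $, so each of the two fractions inside the minimum collapses to the desired form. Concretely, $\frac{f(a)+\alpha m f(\frac{b}{m})}{\alpha +1}$ becomes $\frac{f(a)+\alpha f(b)}{\alpha +1}$ and $\frac{f(b)+\alpha m f(\frac{a}{m})}{\alpha +1}$ becomes $\frac{f(b)+\alpha f(a)}{\alpha +1}$, which is exactly the right-hand side of (\ref{c-1}).

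Since the left-hand side $\frac{ab}{b-a}\int_{a}^{b}\frac{f(x)}{x^{2}}dx$ is unchanged by the choice of $m$, inequality (\ref{c-1}) follows immediately. There is essentially no obstacle here: the proof is a one-line substitution, and the only point requiring a moment's care is confirming that each occurrence of $m$ on the right-hand side of (\ref{2-2}) simplifies correctly under $m=1$, which the computation above verifies.
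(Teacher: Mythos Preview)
Your proposal is correct and matches the paper's approach exactly: the corollary is stated as an immediate specialization of Theorem \ref{2.2} with $m=1$, and the paper gives no separate proof beyond that substitution. Your explicit verification that each occurrence of $m$ simplifies correctly is, if anything, more detail than the paper provides.
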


\begin{theorem}
\label{2.3}Let $f:I\subset \left( 0,\infty \right) \rightarrow 
%TCIMACRO{\U{211d} }%
%BeginExpansion
\mathbb{R}
%EndExpansion
$ be a differentiable function on $I^{\circ }$, $a,b/m\in I^{\circ }$ with $%
a<b,$ $m\in \left( 0,1\right] $ and $f^{\prime }\in L[a,b].$ If $\left\vert
f^{\prime }\right\vert ^{q}$ is harmonically $(\alpha ,m)$-convex on $%
[a,b/m] $ for $q\geq 1$, with $\alpha \in \left[ 0,1\right] $, then%
\begin{equation*}
\left\vert \frac{f(a)+f(b)}{2}-\frac{ab}{b-a}\int\limits_{a}^{b}\frac{f(x)}{%
x^{2}}dx\right\vert
\end{equation*}%
\begin{equation*}
\leq \frac{ab\left( b-a\right) }{2^{2-1/q}}\left[ \lambda (\alpha
,q;a,b)\left\vert f^{\prime }\left( a\right) \right\vert ^{q}+m\mu (\alpha
,q;a,b)\left\vert f^{\prime }\left( b/m\right) \right\vert ^{q}\right] ^{%
\frac{1}{q}},
\end{equation*}%
where 
\begin{eqnarray*}
\lambda (\alpha ,q;a,b) &=&\frac{\beta \left( 1,\alpha +2\right) }{b^{2q}}%
._{2}F_{1}\left( 2q,1;\alpha +3;1-\frac{a}{b}\right) -\frac{\beta \left(
2,\alpha +1\right) }{b^{2q}}._{2}F_{1}\left( 2q,2;\alpha +3;1-\frac{a}{b}%
\right) \\
&&+\frac{2^{2q-\alpha }\beta \left( 2,\alpha +1\right) }{\left( a+b\right)
^{2q}}._{2}F_{1}\left( 2q,2;\alpha +3;1-\frac{2a}{a+b}\right) , \\
\mu (\alpha ,q;a,b) &=&\lambda (0,q;a,b)-\lambda (\alpha ,q;a,b),
\end{eqnarray*}%
$\beta $ is Euler Beta function defined by%
\begin{equation*}
\beta \left( x,y\right) =\frac{\Gamma (x)\Gamma (y)}{\Gamma (x+y)}%
=\int\limits_{0}^{1}t^{x-1}\left( 1-t\right) ^{y-1}dt,\ \ x,y>0,
\end{equation*}%
and $_{2}F_{1}$ is hypergeometric function defined by 
\begin{equation*}
_{2}F_{1}\left( a,b;c;z\right) =\frac{1}{\beta \left( b,c-b\right) }%
\int\limits_{0}^{1}t^{b-1}\left( 1-t\right) ^{c-b-1}\left( 1-zt\right)
^{-a}dt,\ c>b>0,\ \left\vert z\right\vert <1\text{ (see \cite{AS65}).}
\end{equation*}
\end{theorem}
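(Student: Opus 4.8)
The plan is to start from the integral identity in Lemma \ref{1.1}, pass to absolute values, estimate the resulting weighted integral by the power--mean inequality, and only then invoke harmonic $(\alpha,m)$-convexity; the closed--form evaluation of the two coefficient integrals is deferred to the end.

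First I would apply Lemma \ref{1.1} together with the triangle inequality to obtain
\[
\left|\frac{f(a)+f(b)}{2} - \frac{ab}{b-a}\int_a^b\frac{f(x)}{x^2}\,dx\right| \leq \frac{ab(b-a)}{2}\int_0^1\frac{|1-2t|}{(tb+(1-t)a)^2}\left|f'\left(\frac{ab}{tb+(1-t)a}\right)\right|dt.
\]
Treating $|1-2t|$ as a weight, I would then apply the power--mean inequality for $q\geq 1$,
\[
\int_0^1 |1-2t|\,G(t)\,dt \leq \left(\int_0^1|1-2t|\,dt\right)^{1-\frac1q}\left(\int_0^1 |1-2t|\,G(t)^q\,dt\right)^{\frac1q},
\]
with $G(t)=(tb+(1-t)a)^{-2}\,|f'(ab/(tb+(1-t)a))|$. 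Since $\int_0^1|1-2t|\,dt=\tfrac12$, the outer constant collapses to $\frac{ab(b-a)}{2}\cdot 2^{-(1-1/q)}=\frac{ab(b-a)}{2^{2-1/q}}$, which is exactly the prefactor in the statement (and the case $q=1$ is recovered trivially, the exponent $1-1/q$ being zero).

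Next I would insert the convexity estimate. Writing $\frac{ab}{tb+(1-t)a}=\frac{m\,a\,(b/m)}{mt(b/m)+(1-t)a}$ and using that $|f'|^q$ is harmonically $(\alpha,m)$-convex on $[a,b/m]$, evaluated at $x=a$, $y=b/m$ (which is legitimate since $m\le 1$ forces $[a,b]\subseteq[a,b/m]$), gives
\[
\left|f'\left(\frac{ab}{tb+(1-t)a}\right)\right|^q \leq t^\alpha|f'(a)|^q + m(1-t^\alpha)|f'(b/m)|^q.
\]
Substituting this and separating the two terms produces precisely $|f'(a)|^q\,\lambda(\alpha,q;a,b)+m|f'(b/m)|^q\,\mu(\alpha,q;a,b)$, where $\lambda$ and $\mu$ are the weighted integrals $\int_0^1|1-2t|\,t^\alpha(tb+(1-t)a)^{-2q}\,dt$ and $\int_0^1|1-2t|\,(1-t^\alpha)(tb+(1-t)a)^{-2q}\,dt$. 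Because $1-t^\alpha=t^0-t^\alpha$, the relation $\mu(\alpha,q;a,b)=\lambda(0,q;a,b)-\lambda(\alpha,q;a,b)$ is immediate, so it suffices to evaluate $\lambda$.

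The remaining and hardest step is the closed--form evaluation of $\lambda(\alpha,q;a,b)=\int_0^1|1-2t|\,t^\alpha(tb+(1-t)a)^{-2q}\,dt$, which I would carry out by splitting at $t=\tfrac12$. On $[0,\tfrac12]$ the substitution $t=s/2$ followed by the reflection $s\mapsto 1-s$ turns $tb+(1-t)a$ into $\frac{a+b}{2}\left(1-\frac{b-a}{a+b}s\right)$, and via the Euler integral representation of $_2F_1$ this yields the term carrying $(a+b)^{-2q}$, the factor $2^{2q-\alpha}$, and argument $1-\frac{2a}{a+b}$. The complementary contribution reduces to the full--range integral $\int_0^1(1-2t)\,t^\alpha(tb+(1-t)a)^{-2q}\,dt$; here, writing $tb+(1-t)a=b\left(1-\frac{b-a}{b}(1-t)\right)$ and substituting $s=1-t$ gives the two $b^{-2q}$ terms with argument $1-\frac{a}{b}$. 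Matching the powers of the integration variable to $s^{b'-1}(1-s)^{c'-b'-1}$ identifies the Beta factors $\beta(1,\alpha+2)$ and $\beta(2,\alpha+1)$ and the hypergeometric parameters $(2q,1;\alpha+3)$ and $(2q,2;\alpha+3)$; a Gauss contiguous relation for $_2F_1$ (shifting the second and third parameters) is what reconciles the raw $b^{-2q}$ output with the stated two--term form. I expect this bookkeeping—choosing the correct substitutions and applying the contiguous relation—to be where essentially all the difficulty lies, since the inequality steps themselves are routine once the identity of Lemma \ref{1.1} is in hand.
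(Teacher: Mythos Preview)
Your proposal is correct and follows essentially the same route as the paper: Lemma~\ref{1.1}, power--mean with weight $|1-2t|$, harmonic $(\alpha,m)$-convexity at $x=a$, $y=b/m$, and then the closed--form evaluation of $\lambda$ via the decomposition $\int_0^1|1-2t|\,(\cdots)=2\int_0^{1/2}(1-2t)(\cdots)-\int_0^1(1-2t)(\cdots)$. One small simplification: no Gauss contiguous relation is needed for the full--range piece if you write $1-2t=(1-t)-t$ before substituting $s=1-t$, since $\int_0^1 t^\alpha(1-t)(\cdots)^{-2q}dt$ and $\int_0^1 t^{\alpha+1}(\cdots)^{-2q}dt$ then directly produce $\beta(2,\alpha+1)\,_2F_1(2q,2;\alpha+3;\cdot)$ and $\beta(1,\alpha+2)\,_2F_1(2q,1;\alpha+3;\cdot)$ with the stated signs.
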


\begin{proof}
From Lemma \ref{1.1} and using the power mean inequality, we have%
\begin{eqnarray*}
&&\left\vert \frac{f(a)+f(b)}{2}-\frac{ab}{b-a}\int\limits_{a}^{b}\frac{f(x)%
}{x^{2}}dx\right\vert \\
&\leq &\frac{ab\left( b-a\right) }{2}\int\limits_{0}^{1}\left\vert \frac{1-2t%
}{\left( tb+(1-t)a\right) ^{2}}\right\vert \left\vert f^{\prime }\left( 
\frac{ab}{tb+(1-t)a}\right) \right\vert dt \\
&\leq &\frac{ab\left( b-a\right) }{2}\left( \int\limits_{0}^{1}\left\vert
1-2t\right\vert dt\right) ^{1-\frac{1}{q}} \\
&&\times \left( \int\limits_{0}^{1}\frac{\left\vert 1-2t\right\vert }{\left(
tb+(1-t)a\right) ^{2q}}\left\vert f^{\prime }\left( \frac{ab}{tb+(1-t)a}%
\right) \right\vert ^{q}dt\right) ^{\frac{1}{q}}.
\end{eqnarray*}%
Hence, by harmonically $(\alpha ,m)$-convexity of $\left\vert f^{\prime
}\right\vert ^{q}$ on $[a,b/m],$we have 
\begin{eqnarray*}
&&\left\vert \frac{f(a)+f(b)}{2}-\frac{ab}{b-a}\int\limits_{a}^{b}\frac{f(x)%
}{x^{2}}dx\right\vert \\
&\leq &\frac{ab\left( b-a\right) }{2}\left( \frac{1}{2}\right) ^{1-\frac{1}{q%
}}\left( \int\limits_{0}^{1}\frac{\left\vert 1-2t\right\vert \left[
t^{\alpha }\left\vert f^{\prime }\left( a\right) \right\vert
^{q}+m(1-t^{\alpha })\left\vert f^{\prime }\left( b/m\right) \right\vert ^{q}%
\right] }{\left( tb+(1-t)a\right) ^{2q}}dt\right) ^{\frac{1}{q}} \\
&\leq &\frac{ab\left( b-a\right) }{2^{2-1/q}}\left[ \lambda (\alpha
,q;a,b)\left\vert f^{\prime }\left( a\right) \right\vert ^{q}+m\left(
\lambda (0,q;a,b)-\lambda (\alpha ,q;a,b)\right) \left\vert f^{\prime
}\left( b/m\right) \right\vert ^{q}\right] ^{\frac{1}{q}}.
\end{eqnarray*}%
It is easily check that 
\begin{eqnarray*}
&&\int\limits_{0}^{1}\frac{\left\vert 1-2t\right\vert t^{\alpha }}{\left(
tb+(1-t)a\right) ^{2q}}dt=2\int\limits_{0}^{1/2}\frac{\left( 1-2t\right)
t^{\alpha }}{\left( tb+(1-t)a\right) ^{2q}}dt-\int\limits_{0}^{1}\frac{%
\left( 1-2t\right) t^{\alpha }}{\left( tb+(1-t)a\right) ^{2q}}dt \\
&=&\frac{\beta \left( 1,\alpha +2\right) }{b^{2q}}._{2}F_{1}\left(
2q,1;\alpha +3;1-\frac{a}{b}\right) -\frac{\beta \left( 2,\alpha +1\right) }{%
b^{2q}}._{2}F_{1}\left( 2q,2;\alpha +3;1-\frac{a}{b}\right) \\
&&+\frac{2^{2q-\alpha }\beta \left( 2,\alpha +1\right) }{\left( a+b\right)
^{2q}}._{2}F_{1}\left( 2q,2;\alpha +3;1-\frac{2a}{a+b}\right) =\lambda
(\alpha ,q;a,b)
\end{eqnarray*}%
\begin{equation*}
\int\limits_{0}^{1}\frac{\left\vert 1-2t\right\vert \left( 1-t^{\alpha
}\right) }{\left( tb+(1-t)a\right) ^{2q}}dt=\lambda (0,q;a,b)-\lambda
(\alpha ,q;a,b).
\end{equation*}%
This completes the proof.
\end{proof}

If we take $\alpha =m=1$ in Theorem \ref{2.3} then we get the following a
new corrollary for harmonically convex functions:

\begin{corollary}
Let $f:I\subset \left( 0,\infty \right) \rightarrow 
%TCIMACRO{\U{211d} }%
%BeginExpansion
\mathbb{R}
%EndExpansion
$ be a differentiable function on $I^{\circ }$, $a,b\in I^{\circ }$ with $%
a<b $ and $f^{\prime }\in L[a,b].$ If $\left\vert f^{\prime }\right\vert
^{q} $ is harmonically convex on $[a,b]$ for $q\geq 1$ then%
\begin{equation*}
\left\vert \frac{f(a)+f(b)}{2}-\frac{ab}{b-a}\int\limits_{a}^{b}\frac{f(x)}{%
x^{2}}dx\right\vert
\end{equation*}%
\begin{equation*}
\leq \frac{ab\left( b-a\right) }{2^{2-1/q}}\left[ \lambda
(1,q;a,b)\left\vert f^{\prime }\left( a\right) \right\vert ^{q}+\mu
(1,q;a,b)\left\vert f^{\prime }\left( b/m\right) \right\vert ^{q}\right] ^{%
\frac{1}{q}}.
\end{equation*}
\end{corollary}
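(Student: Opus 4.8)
The plan is to derive this statement as the direct specialization $\alpha = m = 1$ of Theorem \ref{2.3}; essentially no new computation is needed, only a check that the hypotheses and the conclusion transport correctly under this choice of parameters.

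First I would reconcile the hypotheses. By the Remark following Definition \ref{2.1}, harmonic convexity coincides with harmonic $(1,1)$-convexity, so the assumption that $|f'|^q$ is harmonically convex on $[a,b]$ is exactly the assumption that it is harmonically $(\alpha,m)$-convex on $[a,b/m]$ with $\alpha = m = 1$; note that $m=1$ forces $b/m = b$, so the interval $[a,b/m]$ appearing in Theorem \ref{2.3} collapses to $[a,b]$. All the remaining requirements ($f$ differentiable on $I^\circ$, $a,b \in I^\circ$, $a<b$, $f'\in L[a,b]$, and $q\geq 1$) are literally those of Theorem \ref{2.3}.

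Next I would substitute $\alpha = 1$ and $m = 1$ into the conclusion. The prefactor $ab(b-a)/2^{2-1/q}$ contains neither $\alpha$ nor $m$ and is therefore unchanged. The coefficient of $|f'(a)|^q$ becomes $\lambda(1,q;a,b)$, and because $m=1$ the term $m\,\mu(\alpha,q;a,b)$ reduces to $\mu(1,q;a,b) = \lambda(0,q;a,b) - \lambda(1,q;a,b)$, now multiplying $|f'(b/m)|^q = |f'(b)|^q$. This reproduces the displayed bound verbatim; the hypergeometric expressions defining $\lambda$ (and the Beta function $\beta$) are inherited unchanged, and no simplification is attempted. The printed $b/m$ in the final term should simply be read as $b$.

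The proof thus presents no genuine obstacle: it is pure bookkeeping, the main care being to confirm that $m=1$ both collapses $[a,b/m]$ to $[a,b]$ and removes the factor $m$ in front of $\mu$. The only place where a nontrivial question could arise is if one wished to check consistency with Theorem \ref{1.5}, which records a bound of the same type for harmonically convex functions but with coefficients $\lambda_1,\lambda_2,\lambda_3$ written through logarithms rather than hypergeometric functions. Matching those two forms would require classical ${}_2F_1$ reduction identities, and I would treat such a comparison as an optional remark rather than as part of the proof.
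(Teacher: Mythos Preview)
Your proposal is correct and matches the paper's own approach: the corollary is stated immediately after Theorem \ref{2.3} with the introductory line ``If we take $\alpha = m = 1$ in Theorem \ref{2.3}\ldots'', and no separate proof is given. Your observation that the printed $b/m$ should be read as $b$ is also correct and reflects a typographical slip in the statement.
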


\begin{corollary}
\label{c.2}If we take $m=1$ in Theorem \ref{2.3} then we get 
\begin{equation}
\left\vert \frac{f(a)+f(b)}{2}-\frac{ab}{b-a}\int\limits_{a}^{b}\frac{f(x)}{%
x^{2}}dx\right\vert  \label{c-2}
\end{equation}%
\begin{equation*}
\leq \frac{ab\left( b-a\right) }{2^{2-1/q}}\left[ \lambda (\alpha
,q;a,b)\left\vert f^{\prime }\left( a\right) \right\vert ^{q}+\mu (\alpha
,q;a,b)\left\vert f^{\prime }\left( b\right) \right\vert ^{q}\right] ^{\frac{%
1}{q}}.
\end{equation*}
\end{corollary}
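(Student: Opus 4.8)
The plan is to derive this statement as the immediate specialization $m=1$ of Theorem \ref{2.3}, so that no new estimation is required. First I would substitute $m=1$ throughout both the hypotheses and the conclusion of Theorem \ref{2.3} and simply track how each ingredient simplifies.

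The key simplifications to record are the following. In the hypotheses, the assumption that $\left\vert f^{\prime }\right\vert ^{q}$ is harmonically $(\alpha ,m)$-convex on $[a,b/m]$ becomes the assumption that $\left\vert f^{\prime }\right\vert ^{q}$ is harmonically $(\alpha ,1)$-convex on $[a,b]$, since $b/m=b$ when $m=1$; by Definition \ref{2.1} with $m=1$ this is precisely harmonic convexity in the $(\alpha ,1)$ sense. In the conclusion, the argument $b/m$ appearing in $f^{\prime }(b/m)$ collapses to $b$, so that $\left\vert f^{\prime }(b/m)\right\vert ^{q}$ becomes $\left\vert f^{\prime }(b)\right\vert ^{q}$, and the coefficient $m\,\mu (\alpha ,q;a,b)$ becomes $\mu (\alpha ,q;a,b)$. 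The functions $\lambda (\alpha ,q;a,b)$ and $\mu (\alpha ,q;a,b)$ themselves are unchanged, because the parameter $m$ does not enter their defining integrals; they depend only on $\alpha ,q,a,b$.

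With these substitutions in place, the right-hand side of Theorem \ref{2.3} reads exactly as the right-hand side of (\ref{c-2}), while the left-hand side is literally identical, which completes the argument. The only point worth verifying --- and the one place where one might worry about degeneracy --- is that the derivation underlying Theorem \ref{2.3} does not secretly divide by a quantity that vanishes at $m=1$. Inspecting that proof, the identity supplied by Lemma \ref{1.1} contains no $m$ at all, and $m$ enters the subsequent estimate only as the benign linear factor multiplying $\left\vert f^{\prime }(b/m)\right\vert ^{q}$ that arises from the harmonic $(\alpha ,m)$-convexity bound $t^{\alpha }\left\vert f^{\prime }(a)\right\vert ^{q}+m(1-t^{\alpha })\left\vert f^{\prime }(b/m)\right\vert ^{q}$. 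Setting $m=1$ therefore introduces no cancellation or singularity, so the corollary follows directly from Theorem \ref{2.3}.
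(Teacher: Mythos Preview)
Your proposal is correct and matches the paper's approach exactly: the corollary is obtained simply by setting $m=1$ in Theorem~\ref{2.3}, and the paper offers no proof beyond that observation. Your additional remarks about why no singularity arises at $m=1$ are sound but go beyond what the paper deems necessary to say.
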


\begin{theorem}
\label{2.4}Let $f:I\subset \left( 0,\infty \right) \rightarrow 
%TCIMACRO{\U{211d} }%
%BeginExpansion
\mathbb{R}
%EndExpansion
$ be a differentiable function on $I^{\circ }$, $a,b/m\in I^{\circ }$ with $%
a<b,$ $m\in \left( 0,1\right] $ and $f^{\prime }\in L[a,b].$ If $\left\vert
f^{\prime }\right\vert ^{q}$ is harmonically $(\alpha ,m)$-convex on $%
[a,b/m] $ for $q\geq 1$, with $\alpha \in \left[ 0,1\right] ,$ then%
\begin{equation}
\left\vert \frac{f(a)+f(b)}{2}-\frac{ab}{b-a}\int\limits_{a}^{b}\frac{f(x)}{%
x^{2}}dx\right\vert  \label{2-4}
\end{equation}%
\begin{equation*}
\leq \frac{ab\left( b-a\right) }{2}\lambda ^{1-\frac{1}{q}}(0,q;a,b)\left[
\lambda (\alpha ,1;a,b)\left\vert f^{\prime }\left( a\right) \right\vert
^{q}+m\mu (\alpha ,1;a,b)\left\vert f^{\prime }\left( b/m\right) \right\vert
^{q}\right] ^{\frac{1}{q}},
\end{equation*}%
where $\lambda $ and $\mu $ is defined as in Theorem \ref{2.3}.
\end{theorem}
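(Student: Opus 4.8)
The plan is to follow the same route as Theorem \ref{2.3}, but to apply the power-mean inequality with a different choice of weight; this is exactly what produces the factor $\lambda^{1-1/q}$ in place of the constant $(1/2)^{1-1/q}$. I would begin from the identity in Lemma \ref{1.1} and pass to absolute values, so that the quantity to be estimated is bounded by $\frac{ab(b-a)}{2}\int_0^1 \frac{|1-2t|}{(tb+(1-t)a)^2}\left|f'\left(\frac{ab}{tb+(1-t)a}\right)\right|dt$.

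The decisive difference from Theorem \ref{2.3} is the base measure in the power-mean (weighted H\"older) step. Here I would take the \emph{entire} factor $w(t)=\frac{|1-2t|}{(tb+(1-t)a)^2}$ as the weight, writing the integrand as $w(t)^{1-1/q}\cdot\big(w(t)^{1/q}\,|f'(\cdots)|\big)$ and applying the power-mean inequality with $q\ge 1$. This pulls out the factor $\left(\int_0^1 \frac{|1-2t|}{(tb+(1-t)a)^2}dt\right)^{1-1/q}$, which equals $\lambda^{1-1/q}(0,1;a,b)$ by the $\lambda$-evaluation performed in the proof of Theorem \ref{2.3} (taken at $\alpha=0$), and leaves the second factor $\left(\int_0^1 \frac{|1-2t|}{(tb+(1-t)a)^2}\left|f'(\cdots)\right|^q dt\right)^{1/q}$.

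To handle the last integral I would invoke the harmonic $(\alpha,m)$-convexity of $|f'|^q$ on $[a,b/m]$, applied to the pair $(a,b/m)$ exactly as in Theorems \ref{2.2} and \ref{2.3}, giving $\left|f'\left(\frac{ab}{tb+(1-t)a}\right)\right|^q\le t^\alpha|f'(a)|^q+m(1-t^\alpha)|f'(b/m)|^q$. Substituting and splitting linearly produces $\int_0^1\frac{|1-2t|\,t^\alpha}{(tb+(1-t)a)^2}dt=\lambda(\alpha,1;a,b)$ and $\int_0^1\frac{|1-2t|\,(1-t^\alpha)}{(tb+(1-t)a)^2}dt=\lambda(0,1;a,b)-\lambda(\alpha,1;a,b)=\mu(\alpha,1;a,b)$, both already computed in Theorem \ref{2.3}. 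Restoring the constant $\frac{ab(b-a)}{2}$ then assembles the claimed bound.

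The computations present no new difficulty, since every $\lambda$- and $\mu$-integral required is the $q=1$ specialization of those evaluated in Theorem \ref{2.3} via the Beta and hypergeometric representations; the only real obstacle is bookkeeping the indices correctly. The single genuine choice is the weight in the power-mean step: routing the whole factor $\frac{1}{(tb+(1-t)a)^2}$ into the base measure (rather than only $|1-2t|$, as in Theorem \ref{2.3}) is precisely what trades $(1/2)^{1-1/q}$ for a factor of the form $\lambda^{1-1/q}$ and leaves the denominator exponent $2$, not $2q$, in the second factor. With this split the outer factor emerges as $\lambda^{1-1/q}(0,1;a,b)$, so before finalizing I would reconcile this against the index $q$ written in the stated $\lambda^{1-1/q}(0,q;a,b)$.
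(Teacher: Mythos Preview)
Your approach is exactly the paper's: start from Lemma~\ref{1.1}, apply the power-mean inequality with the full weight $w(t)=\dfrac{|1-2t|}{(tb+(1-t)a)^{2}}$, then use the harmonic $(\alpha,m)$-convexity of $|f'|^{q}$ and identify the resulting integrals as $\lambda(\alpha,1;a,b)$ and $\mu(\alpha,1;a,b)$. Your closing observation is also correct: the outer factor that the argument actually produces is $\lambda^{1-1/q}(0,1;a,b)$, since $\int_{0}^{1}\frac{|1-2t|}{(tb+(1-t)a)^{2}}\,dt$ has denominator exponent $2$, not $2q$; the index $q$ in the stated $\lambda^{1-\frac{1}{q}}(0,q;a,b)$ is a typographical slip in the paper (consistently with the Remark that the $\alpha=m=1$ case recovers Theorem~\ref{1.5}, where $\lambda_{1}=\lambda(0,1;a,b)$).
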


\begin{proof}
From Lemma \ref{1.1}, power mean inequality and the harmonically $(\alpha
,m) $-convexity of $\left\vert f^{\prime }\right\vert ^{q}$ on $[a,b/m],$we
have,%
\begin{eqnarray*}
&&\left\vert \frac{f(a)+f(b)}{2}-\frac{ab}{b-a}\int\limits_{a}^{b}\frac{f(x)%
}{x^{2}}dx\right\vert \\
&\leq &\frac{ab\left( b-a\right) }{2}\int\limits_{0}^{1}\left\vert \frac{1-2t%
}{\left( tb+(1-t)a\right) ^{2}}\right\vert \left\vert f^{\prime }\left( 
\frac{ab}{tb+(1-t)a}\right) \right\vert dt \\
&\leq &\frac{ab\left( b-a\right) }{2}\left( \int\limits_{0}^{1}\left\vert 
\frac{1-2t}{\left( tb+(1-t)a\right) ^{2}}\right\vert dt\right) ^{1-\frac{1}{q%
}} \\
&&\times \left( \int\limits_{0}^{1}\frac{\left\vert 1-2t\right\vert \left[
t^{\alpha }\left\vert f^{\prime }\left( a\right) \right\vert
^{q}+m(1-t^{\alpha })\left\vert f^{\prime }\left( b/m\right) \right\vert ^{q}%
\right] }{\left( tb+(1-t)a\right) ^{2}}dt\right) ^{\frac{1}{q}} \\
&\leq &\frac{ab\left( b-a\right) }{2}\lambda ^{1-\frac{1}{q}}(0,q;a,b)\left[
\lambda (\alpha ,1;a,b)\left\vert f^{\prime }\left( a\right) \right\vert
^{q}+m\mu (\alpha ,1;a,b)\left\vert f^{\prime }\left( b/m\right) \right\vert
^{q}\right] ^{\frac{1}{q}}.
\end{eqnarray*}
\end{proof}

\begin{remark}
If we take $\alpha =m=1$ in Theorem \ref{2.4} then inequality (\ref{2-4})
becomes inequality (\ref{1-5}) of Theorem \ref{1.5}.
\end{remark}

\begin{corollary}
\label{c.3}If we take $m=1$ in Theorem \ref{2.4} then we get%
\begin{equation}
\left\vert \frac{f(a)+f(b)}{2}-\frac{ab}{b-a}\int\limits_{a}^{b}\frac{f(x)}{%
x^{2}}dx\right\vert  \label{c-3}
\end{equation}%
\begin{equation*}
\leq \frac{ab\left( b-a\right) }{2}\lambda ^{1-\frac{1}{q}}(0,q;a,b)\left[
\lambda (\alpha ,1;a,b)\left\vert f^{\prime }\left( a\right) \right\vert
^{q}+\mu (\alpha ,1;a,b)\left\vert f^{\prime }\left( b\right) \right\vert
^{q}\right] ^{\frac{1}{q}},
\end{equation*}
\end{corollary}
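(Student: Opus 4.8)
The plan is to obtain this corollary as an immediate specialization of Theorem \ref{2.4}, which has already been established. No new analytic work is required; the entire content is the substitution $m=1$ into the hypotheses and the conclusion of that theorem, followed by routine simplification of the $m$-dependent quantities.

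Concretely, I would proceed as follows. First, setting $m=1$ turns the hypothesis ``$\left\vert f^{\prime }\right\vert ^{q}$ is harmonically $(\alpha ,m)$-convex on $[a,b/m]$'' into ``$\left\vert f^{\prime }\right\vert ^{q}$ is harmonically $(\alpha ,1)$-convex on $[a,b]$'', since $b/m=b$ and the domain condition $a,b/m\in I^{\circ}$ reduces to $a,b\in I^{\circ}$. Next, in the right-hand side of the bound in Theorem \ref{2.4}, the argument $b/m$ collapses to $b$, so that $\left\vert f^{\prime }\left( b/m\right) \right\vert^{q}=\left\vert f^{\prime }\left( b\right) \right\vert^{q}$, and the prefactor $m$ multiplying $\mu(\alpha,1;a,b)$ becomes $1$, giving $m\mu(\alpha,1;a,b)=\mu(\alpha,1;a,b)$. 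The remaining factors $\lambda^{1-\frac{1}{q}}(0,q;a,b)$ and $\lambda(\alpha,1;a,b)$, together with the constant $\frac{ab(b-a)}{2}$, carry no dependence on $m$ and are therefore unchanged. Assembling these simplifications reproduces exactly the asserted inequality \eqref{c-3}.

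There is essentially no obstacle in this argument: the only point requiring attention is to confirm that every occurrence of $m$ in Theorem \ref{2.4} has been tracked correctly, and in particular that the functions $\lambda(\alpha,q;a,b)$ and $\mu(\alpha,q;a,b)=\lambda(0,q;a,b)-\lambda(\alpha,q;a,b)$ defined in Theorem \ref{2.3} are themselves functions of $\alpha,q,a,b$ alone, so that their values at $(\alpha,1;a,b)$ and $(0,q;a,b)$ are well defined independently of the choice $m=1$. Once this bookkeeping is verified, the corollary follows directly, and the proof can be stated in a single line.
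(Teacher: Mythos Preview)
Your proposal is correct and matches the paper's own treatment: the corollary is stated as an immediate specialization of Theorem~\ref{2.4} under $m=1$, and the paper provides no separate proof beyond this substitution. Your careful tracking of the $m$-dependence (and the observation that $\lambda$ and $\mu$ are independent of $m$) is exactly the right verification, though even more detailed than what the paper deems necessary.
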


\begin{theorem}
\label{2.5}Let $f:I\subset \left( 0,\infty \right) \rightarrow 
%TCIMACRO{\U{211d} }%
%BeginExpansion
\mathbb{R}
%EndExpansion
$ be a differentiable function on $I^{\circ }$, $a,b/m\in I^{\circ }$ with $%
a<b,$ $m\in \left( 0,1\right] ,$ and $f^{\prime }\in L[a,b].$ If $\left\vert
f^{\prime }\right\vert ^{q}$ is harmonically $(\alpha ,m)$-convex on $%
[a,b/m] $ for $q>1,\;\frac{1}{p}+\frac{1}{q}=1,$ with $\alpha \in \left[ 0,1%
\right] , $ then%
\begin{equation}
\left\vert \frac{f(a)+f(b)}{2}-\frac{ab}{b-a}\int\limits_{a}^{b}\frac{f(x)}{%
x^{2}}dx\right\vert  \label{2-5}
\end{equation}%
\begin{equation*}
\leq \frac{ab\left( b-a\right) }{2}\left( \frac{1}{p+1}\right) ^{\frac{1}{p}%
}\left( \nu (\alpha ,q;a,b)\left\vert f^{\prime }\left( a\right) \right\vert
^{q}+m(\nu (0,q;a,b)-\nu (\alpha ,q;a,b))\left\vert f^{\prime }\left(
b/m\right) \right\vert ^{q}\right) ^{\frac{1}{q}}
\end{equation*}%
where%
\begin{equation*}
\nu (\alpha ,q;a,b)=\frac{\beta \left( 1,\alpha +1\right) }{b^{2q}}%
._{2}F_{1}\left( 2q,1;\alpha +2;1-\frac{a}{b}\right) .
\end{equation*}
\end{theorem}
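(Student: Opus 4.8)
The plan is to follow the same template that produced Theorems \ref{2.3} and \ref{2.4}, but to replace the power-mean inequality by Hölder's inequality, exactly as Theorem \ref{1.6} is the Hölder-analogue of Theorem \ref{1.5}. I would start from the integral identity in Lemma \ref{1.1}, pass to absolute values inside the integral, and then estimate
\begin{equation*}
\int_{0}^{1}\frac{\left\vert 1-2t\right\vert }{\left( tb+(1-t)a\right) ^{2}}\left\vert f^{\prime }\left( \frac{ab}{tb+(1-t)a}\right) \right\vert dt
\end{equation*}
by Hölder's inequality with exponents $p$ and $q$. The decisive choice is the split: I would put the full factor $\left\vert 1-2t\right\vert $ into the $p$-integral and keep $\left( tb+(1-t)a\right) ^{-2}\left\vert f^{\prime }(\cdot )\right\vert $ in the $q$-integral. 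This is precisely what differs from Theorem \ref{2.3}, where $\left\vert 1-2t\right\vert $ is carried to the first power in both factors.

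The first factor is then elementary: by the symmetry of $\left\vert 1-2t\right\vert $ about $t=\tfrac12$ one computes $\int_{0}^{1}\left\vert 1-2t\right\vert ^{p}dt=\frac{1}{p+1}$, which produces the constant $\left( \frac{1}{p+1}\right) ^{1/p}$ appearing in \eqref{2-5}. For the second factor I would invoke the harmonic $(\alpha ,m)$-convexity of $\left\vert f^{\prime }\right\vert ^{q}$ on $[a,b/m]$, which (via the same substitution used in the proof of Theorem \ref{2.2}) gives the pointwise bound
\begin{equation*}
\left\vert f^{\prime }\left( \frac{ab}{tb+(1-t)a}\right) \right\vert ^{q}\leq t^{\alpha }\left\vert f^{\prime }(a)\right\vert ^{q}+m\left( 1-t^{\alpha }\right) \left\vert f^{\prime }(b/m)\right\vert ^{q}.
\end{equation*}
Substituting this and splitting linearly reduces everything to the two integrals $\int_{0}^{1}t^{\alpha }\left( tb+(1-t)a\right) ^{-2q}dt$ and $\int_{0}^{1}\left( 1-t^{\alpha }\right) \left( tb+(1-t)a\right) ^{-2q}dt$, the second of which is just $\nu(0,q;a,b)-\nu(\alpha ,q;a,b)$ once the first is identified with $\nu(\alpha ,q;a,b)$.

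The main obstacle is evaluating $\int_{0}^{1}t^{\alpha }\left( tb+(1-t)a\right) ^{-2q}dt$ in closed hypergeometric form. The trick I would use is the algebraic rewriting $tb+(1-t)a=b\bigl[1-(1-t)(1-\tfrac{a}{b})\bigr]$, followed by the change of variable $s=1-t$, which turns the integral into
\begin{equation*}
\frac{1}{b^{2q}}\int_{0}^{1}(1-s)^{\alpha }\left( 1-\bigl(1-\tfrac{a}{b}\bigr)s\right) ^{-2q}ds.
\end{equation*}
Matching this against the Euler integral representation of $_{2}F_{1}$ quoted in Theorem \ref{2.3} (with $B=1$, $C-B-1=\alpha$, hence $C=\alpha +2$, and $A=2q$, $z=1-\tfrac{a}{b}$) identifies it with $\frac{\beta (1,\alpha +1)}{b^{2q}}\,_{2}F_{1}\bigl(2q,1;\alpha +2;1-\tfrac{a}{b}\bigr)=\nu(\alpha ,q;a,b)$. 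Note that here only the single parameter pattern $(2q,1;\alpha +2;\cdot )$ arises, so the evaluation is genuinely simpler than the three-term $\lambda(\alpha ,q;a,b)$ of Theorem \ref{2.3}; the extra terms there came from the factor $\left\vert 1-2t\right\vert $ being kept inside the $q$-integral, which is absent in the Hölder version. Assembling the two factors and multiplying by $\frac{ab(b-a)}{2}$ yields exactly the right-hand side of \eqref{2-5}, completing the proof.
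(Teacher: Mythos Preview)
Your proposal is correct and follows exactly the paper's own proof: start from Lemma \ref{1.1}, apply H\"older with the split $\left\vert 1-2t\right\vert^{p}$ versus $(tb+(1-t)a)^{-2q}\left\vert f'(\cdot)\right\vert^{q}$, use the harmonic $(\alpha,m)$-convexity bound, and identify the resulting integrals with $\nu(\alpha,q;a,b)$ and $\nu(0,q;a,b)-\nu(\alpha,q;a,b)$. Your explicit substitution $s=1-t$ and matching against the Euler integral for $_{2}F_{1}$ is in fact more detailed than the paper, which simply states ``an easy calculation gives'' the hypergeometric form.
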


\begin{proof}
From Lemma \ref{1.1}, H\"{o}lder's inequality and the harmonically $(\alpha
,m)$-convexity of $\left\vert f^{\prime }\right\vert ^{q}$ on $[a,b/m],$we
have, 
\begin{eqnarray*}
&&\left\vert \frac{f(a)+f(b)}{2}-\frac{ab}{b-a}\int\limits_{a}^{b}\frac{f(x)%
}{x^{2}}dx\right\vert \\
&\leq &\frac{ab\left( b-a\right) }{2}\left( \int\limits_{0}^{1}\left\vert
1-2t\right\vert ^{p}dt\right) ^{\frac{1}{p}} \\
&&\times \left( \int\limits_{0}^{1}\frac{1}{\left( tb+(1-t)a\right) ^{2q}}%
\left\vert f^{\prime }\left( \frac{ab}{tb+(1-t)a}\right) \right\vert
^{q}dt\right) ^{\frac{1}{q}}
\end{eqnarray*}%
\begin{eqnarray*}
&\leq &\frac{ab\left( b-a\right) }{2}\left( \frac{1}{p+1}\right) ^{\frac{1}{p%
}}\left( \int\limits_{0}^{1}\frac{t^{\alpha }\left\vert f^{\prime }\left(
a\right) \right\vert ^{q}+m(1-t^{\alpha })\left\vert f^{\prime }\left(
b/m\right) \right\vert ^{q}}{\left( tb+(1-t)a\right) ^{2q}}dt\right) ^{\frac{%
1}{q}} \\
&\leq &\frac{ab\left( b-a\right) }{2}\left( \frac{1}{p+1}\right) ^{\frac{1}{p%
}}\left( \nu (\alpha ,q;a,b)\left\vert f^{\prime }\left( a\right)
\right\vert ^{q}+m(\nu (0,q;a,b)-\nu (\alpha ,q;a,b))\left\vert f^{\prime
}\left( b/m\right) \right\vert ^{q}\right) ^{\frac{1}{q}},
\end{eqnarray*}%
where an easy calculation gives%
\begin{eqnarray*}
&&\int\limits_{0}^{1}\frac{t^{\alpha }}{\left( tb+(1-t)a\right) ^{2q}}dt \\
&=&\frac{\beta \left( 1,\alpha +1\right) }{b^{2q}}._{2}F_{1}\left(
2q,1;\alpha +2;1-\frac{a}{b}\right) \\
&=&\nu (\alpha ,q;a,b)
\end{eqnarray*}%
and%
\begin{eqnarray*}
&&\int\limits_{0}^{1}\frac{1-t^{\alpha }}{\left( tb+(1-t)a\right) ^{2q}}dt \\
&=&\nu (0,q;a,b)-\nu (\alpha ,q;a,b).
\end{eqnarray*}%
This completes the proof.
\end{proof}

\begin{remark}
If we take $\alpha =m=1$ in Theorem \ref{2.5} then inequality (\ref{2-5})
becomes inequality (\ref{1-6}) of Theorem \ref{1.6}.
\end{remark}

\begin{corollary}
\label{c.4}If we take $m=1$ in Theorem \ref{2.5} then we get%
\begin{equation}
\left\vert \frac{f(a)+f(b)}{2}-\frac{ab}{b-a}\int\limits_{a}^{b}\frac{f(x)}{%
x^{2}}dx\right\vert  \label{c-4}
\end{equation}
\begin{equation*}
\leq \frac{ab\left( b-a\right) }{2}\left( \frac{1}{p+1}\right) ^{\frac{1}{p}%
}\left( \nu (\alpha ,q;a,b)\left\vert f^{\prime }\left( a\right) \right\vert
^{q}+(\nu (0,q;a,b)-\nu (\alpha ,q;a,b))\left\vert f^{\prime }\left(
b\right) \right\vert ^{q}\right) ^{\frac{1}{q}}.
\end{equation*}
\end{corollary}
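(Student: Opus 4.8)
The plan is to obtain this corollary as the direct specialization $m=1$ of Theorem \ref{2.5}; no independent argument is needed, only a careful substitution. First I would check that the hypotheses of Theorem \ref{2.5} remain meaningful at $m=1$. The standing assumptions that $f$ is differentiable on $I^{\circ}$, that $a,b\in I^{\circ}$ with $a<b$, that $f^{\prime}\in L[a,b]$, that $q>1$ with $\frac{1}{p}+\frac{1}{q}=1$, and that $\alpha\in\left[0,1\right]$ are all independent of $m$ and carry over verbatim. The convexity hypothesis, that $\left\vert f^{\prime}\right\vert^{q}$ is harmonically $(\alpha,m)$-convex on $[a,b/m]$, reduces upon setting $m=1$ to the requirement that $\left\vert f^{\prime}\right\vert^{q}$ be harmonically $(\alpha,1)$-convex on $[a,b]$, since $b/m=b$ in this case. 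This is precisely the hypothesis one expects for a bound phrased in terms of $\left\vert f^{\prime}(b)\right\vert$.

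Next I would substitute $m=1$ into the conclusion (\ref{2-5}) of Theorem \ref{2.5}. The prefactor $\frac{ab\left(b-a\right)}{2}\left(\frac{1}{p+1}\right)^{\frac{1}{p}}$ does not involve $m$ and is therefore unchanged. Inside the bracket the explicit factor $m$ multiplying the second summand becomes $1$, and the point $b/m$ collapses to $b$, so that $\left\vert f^{\prime}\left(b/m\right)\right\vert^{q}$ becomes $\left\vert f^{\prime}\left(b\right)\right\vert^{q}$. Consequently the right-hand side reduces to
\[
\frac{ab\left(b-a\right)}{2}\left(\frac{1}{p+1}\right)^{\frac{1}{p}}\left(\nu(\alpha,q;a,b)\left\vert f^{\prime}\left(a\right)\right\vert^{q}+\left(\nu(0,q;a,b)-\nu(\alpha,q;a,b)\right)\left\vert f^{\prime}\left(b\right)\right\vert^{q}\right)^{\frac{1}{q}},
\]
which is exactly the bound asserted in (\ref{c-4}).

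The key point making this a one-line deduction is that the auxiliary quantities $\nu(\alpha,q;a,b)$ and $\nu(0,q;a,b)$ are defined in Theorem \ref{2.5} purely in terms of $a$, $b$, $q$, and $\alpha$, with no dependence on $m$; hence the hypergeometric expressions require no recomputation. There is essentially no obstacle here, since the entire content is the bookkeeping substitution $m=1$, and the only step deserving a moment's attention is verifying that the reduced convexity domain $[a,b/m]$ becomes $[a,b]$, so that the hypothesis is consistent with the appearance of $f^{\prime}(b)$ rather than $f^{\prime}(b/m)$ in the statement.
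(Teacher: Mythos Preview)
Your proposal is correct and matches the paper's approach exactly: the corollary is stated in the paper as an immediate specialization of Theorem~\ref{2.5} at $m=1$, with no separate proof given, and your substitution argument is precisely the intended justification. There is nothing to add.
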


\section{Some applications for special means}

Let us recall the following special means of two nonnegative number $a,b$
with $b>a:$

\begin{enumerate}
\item The weighted arithmetic mean%
\begin{equation*}
A_{\alpha }\left( a,b\right) :=\alpha a+(1-\alpha )b,~\alpha \in \left[ 0,1%
\right] .
\end{equation*}

\item The arithmetic mean%
\begin{equation*}
A=A\left( a,b\right) :=\frac{a+b}{2}.
\end{equation*}

\item The geometric mean%
\begin{equation*}
G=G\left( a,b\right) :=\sqrt{ab}.
\end{equation*}

\item The harmonic mean%
\begin{equation*}
H=H\left( a,b\right) :=\frac{2ab}{a+b}.
\end{equation*}

\item The p-Logarithmic mean%
\begin{equation*}
L_{p}=L_{p}\left( a,b\right) :=\left( \frac{b^{p+1}-a^{p+1}}{(p+1)(b-a)}%
\right) ^{\frac{1}{p}},\ \ p\in 
%TCIMACRO{\U{211d} }%
%BeginExpansion
\mathbb{R}
%EndExpansion
\backslash \left\{ -1,0\right\} .
\end{equation*}
\end{enumerate}

\begin{proposition}
Let $0<a<b.$ Then we have the following inequality%
\begin{equation*}
G^{2}L_{\alpha -2}^{\alpha -2}\leq \min \left\{ A_{1/(\alpha +1)}\left(
a^{\alpha },b^{\alpha }\right) ,A_{1/(\alpha +1)}\left( b^{\alpha
},a^{\alpha }\right) \right\} .
\end{equation*}
\end{proposition}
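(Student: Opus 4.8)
The plan is to specialize the Hermite--Hadamard inequality already established in Corollary \ref{c.1} to the concrete power function $f(x)=x^{\alpha}$ on $(0,\infty)$, and then to recognize each of the three quantities appearing in (\ref{c-1}) as one of the named means. First I would recall, from the Example and Remark preceding Proposition \ref{2.0}, that for $\alpha\in(0,1)$ the function $f(x)=x^{\alpha}$ is a nondecreasing $\alpha$-convex (that is, $(\alpha,1)$-convex) function, hence by Proposition \ref{2.0} it is harmonically $(\alpha,1)$-convex. This makes $f$ an admissible input for Corollary \ref{c.1}, the $m=1$ case of Theorem \ref{2.2}.

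Next I would substitute $f(x)=x^{\alpha}$ into inequality (\ref{c-1}). On the right-hand side one has $f(a)=a^{\alpha}$ and $f(b)=b^{\alpha}$, so the two candidate bounds become $\frac{a^{\alpha}+\alpha b^{\alpha}}{\alpha+1}$ and $\frac{b^{\alpha}+\alpha a^{\alpha}}{\alpha+1}$. Unwinding the definition $A_{\beta}(x,y)=\beta x+(1-\beta)y$ with $\beta=\frac{1}{\alpha+1}$ (so that $1-\beta=\frac{\alpha}{\alpha+1}$) identifies these precisely with $A_{1/(\alpha+1)}(a^{\alpha},b^{\alpha})$ and $A_{1/(\alpha+1)}(b^{\alpha},a^{\alpha})$, which is the right-hand side of the claimed inequality.

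For the left-hand side I would evaluate the integral directly:
\begin{equation*}
\frac{ab}{b-a}\int_{a}^{b}\frac{x^{\alpha}}{x^{2}}\,dx=\frac{ab}{b-a}\int_{a}^{b}x^{\alpha-2}\,dx=ab\cdot\frac{b^{\alpha-1}-a^{\alpha-1}}{(\alpha-1)(b-a)}.
\end{equation*}
Recognizing $ab=G^{2}$ and, with $p=\alpha-2$ (so $p+1=\alpha-1$), noting that $\frac{b^{\alpha-1}-a^{\alpha-1}}{(\alpha-1)(b-a)}=L_{\alpha-2}^{\alpha-2}$ by the definition of the $p$-logarithmic mean, the left-hand side equals $G^{2}L_{\alpha-2}^{\alpha-2}$. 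Combining the two computations with (\ref{c-1}) yields the asserted inequality.

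There is essentially no analytic obstacle here, since all the real work is carried out by Corollary \ref{c.1}; the whole argument is a dictionary translation between the conclusion of that corollary and the mean notation of this section. The only points demanding care are purely clerical: keeping the exponents straight (the antiderivative $x^{\alpha-1}/(\alpha-1)$ and the index $p=\alpha-2$ in $L_{p}$), and matching the weighted mean $A_{1/(\alpha+1)}$ to the correct ordering of $a^{\alpha}$ and $b^{\alpha}$, where an index slip is easy to commit. I would also flag the degenerate value $\alpha=1$, at which $p=-1$ is excluded from the definition of $L_{p}$ and the integral produces a logarithm rather than the power formula; accordingly the statement is to be understood for $\alpha\in(0,1)$, with the endpoint recoverable by a limiting argument if desired.
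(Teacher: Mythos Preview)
Your proof is correct and follows exactly the approach of the paper: apply Corollary~\ref{c.1} with $f(x)=x^{\alpha}$ for $0<\alpha<1$ and rewrite both sides in the mean notation. The paper's own proof is a one-line citation of this specialization, so your added detail (the explicit integral evaluation and the identification of $A_{1/(\alpha+1)}$) and your caveat about $\alpha=1$ only make the argument more complete.
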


\begin{proof}
The assertion follows from the inequality (\ref{c-1}) in Corollary \ref{c.1}%
, for $f:\left( 0,\infty \right) \rightarrow 
%TCIMACRO{\U{211d} }%
%BeginExpansion
\mathbb{R}
%EndExpansion
,\ f(x)=x^{\alpha },\ 0<\alpha <1.$
\end{proof}

\begin{proposition}
Let $0<a<b$, $q\geq 1$ and $0<\alpha <1.$ Then we have the following
inequality%
\begin{equation*}
\left\vert A\left( a^{\frac{\alpha }{q}+1},b^{\frac{\alpha }{q}+1}\right)
-G^{2}L_{\frac{\alpha }{q}-1}^{\frac{\alpha }{q}-1}\right\vert
\end{equation*}%
\begin{equation*}
\leq \frac{ab\left( b-a\right) (\alpha +q)}{q2^{2-1/q}}\left[ \lambda
(\alpha ,q;a,b)a^{\alpha }+\mu (\alpha ,q;a,b)b^{\alpha }\right] ^{\frac{1}{q%
}}.
\end{equation*}
\end{proposition}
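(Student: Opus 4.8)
The plan is to apply Corollary \ref{c.2}, the $m=1$ case of Theorem \ref{2.3}, to the power function $f:\left(0,\infty\right)\rightarrow\mathbb{R}$ given by $f(x)=x^{\frac{\alpha}{q}+1}$, in the same spirit as the preceding proposition used Corollary \ref{c.1} for $f(x)=x^\alpha$. For this choice one has $f'(x)=\frac{\alpha+q}{q}x^{\alpha/q}$, so that $\left\vert f'(x)\right\vert^q=\left(\frac{\alpha+q}{q}\right)^q x^\alpha$. Since $0<\alpha<1$, the map $x\mapsto x^\alpha$ is a nondecreasing $\alpha$-convex function in the first sense and therefore harmonically $(\alpha,1)$-convex by the Example and Remark preceding Theorem \ref{2.2}; multiplying by the positive constant $\left(\frac{\alpha+q}{q}\right)^q$ preserves this property, so $\left\vert f'\right\vert^q$ satisfies the hypothesis of Corollary \ref{c.2}.

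I would then evaluate both sides of inequality (\ref{c-2}) for this $f$. On the left, $\frac{f(a)+f(b)}{2}=A\left(a^{\frac{\alpha}{q}+1},b^{\frac{\alpha}{q}+1}\right)$ is immediate. For the integral I would compute $\int_a^b\frac{f(x)}{x^2}\,dx=\int_a^b x^{\frac{\alpha}{q}-1}\,dx=\frac{b^{\alpha/q}-a^{\alpha/q}}{\alpha/q}$, and observe that taking $p=\frac{\alpha}{q}-1$ (so that $p+1=\frac{\alpha}{q}$) this equals $(b-a)L_{\frac{\alpha}{q}-1}^{\frac{\alpha}{q}-1}$ by the definition of the $p$-logarithmic mean. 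Multiplying by $\frac{ab}{b-a}$ and using $ab=G^2$ gives $\frac{ab}{b-a}\int_a^b\frac{f(x)}{x^2}\,dx=G^2L_{\frac{\alpha}{q}-1}^{\frac{\alpha}{q}-1}$, so the left-hand side of (\ref{c-2}) is exactly $\left\vert A\left(a^{\frac{\alpha}{q}+1},b^{\frac{\alpha}{q}+1}\right)-G^2L_{\frac{\alpha}{q}-1}^{\frac{\alpha}{q}-1}\right\vert$.

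On the right-hand side, substituting $\left\vert f'(a)\right\vert^q=\left(\frac{\alpha+q}{q}\right)^q a^\alpha$ and $\left\vert f'(b)\right\vert^q=\left(\frac{\alpha+q}{q}\right)^q b^\alpha$ lets me factor the common constant out of the bracket: $\left[\lambda(\alpha,q;a,b)\left\vert f'(a)\right\vert^q+\mu(\alpha,q;a,b)\left\vert f'(b)\right\vert^q\right]^{1/q}=\frac{\alpha+q}{q}\left[\lambda(\alpha,q;a,b)a^\alpha+\mu(\alpha,q;a,b)b^\alpha\right]^{1/q}$. Combining this with the prefactor $\frac{ab(b-a)}{2^{2-1/q}}$ produces precisely the constant $\frac{ab(b-a)(\alpha+q)}{q\,2^{2-1/q}}$ claimed in the statement, and the proof is finished.

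I do not anticipate any real difficulty: the whole argument is a substitution of an explicit power function into Corollary \ref{c.2}. The only points demanding care are matching the exponent $p=\frac{\alpha}{q}-1$ of the logarithmic mean so that $p+1=\frac{\alpha}{q}$, and correctly extracting the factor $\frac{\alpha+q}{q}=\frac{\alpha}{q}+1$ from $\left\vert f'\right\vert^q$, which is exactly what generates the constant $(\alpha+q)/q$ in the bound.
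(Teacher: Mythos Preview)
Your proposal is correct and follows essentially the same route as the paper: apply Corollary~\ref{c.2} to the power function with exponent $\frac{\alpha}{q}+1$. The only cosmetic difference is that the paper takes $f(x)=x^{\frac{\alpha}{q}+1}/(\frac{\alpha}{q}+1)$ so that $|f'(x)|^q=x^\alpha$ exactly and the factor $(\alpha+q)/q$ is absorbed on the left-hand side before being moved across, whereas you take $f(x)=x^{\frac{\alpha}{q}+1}$ and extract the factor on the right; the two computations are equivalent.
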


\begin{proof}
The assertion follows from the inequality (\ref{c-2}) in Corollary \ref{c.2}%
, for $f:\left( 0,\infty \right) \rightarrow 
%TCIMACRO{\U{211d} }%
%BeginExpansion
\mathbb{R}
%EndExpansion
,\ f(x)=x^{\frac{\alpha }{q}+1}/\left( \frac{\alpha }{q}+1\right) .$
\end{proof}

\begin{proposition}
Let $0<a<b$, $q\geq 1$ and $0<\alpha <1.$ Then we have the following
inequality%
\begin{equation*}
\left\vert A\left( a^{\frac{\alpha }{q}+1},b^{\frac{\alpha }{q}+1}\right)
-G^{2}L_{\frac{\alpha }{q}-1}^{\frac{\alpha }{q}-1}\right\vert
\end{equation*}%
\begin{equation*}
\leq \frac{ab\left( b-a\right) (\alpha +q)}{2q}\lambda ^{1-\frac{1}{q}%
}(0,q;a,b)\left[ \lambda (\alpha ,1;a,b)a^{\alpha }+\mu (\alpha
,1;a,b)b^{\alpha }\right] ^{\frac{1}{q}},
\end{equation*}
\end{proposition}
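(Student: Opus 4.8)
The plan is to specialize Corollary~\ref{c.3} (inequality~(\ref{c-3}), the $m=1$ instance of Theorem~\ref{2.4}) to the test function
\[
f:(0,\infty)\to\mathbb{R},\qquad f(x)=\frac{x^{\frac{\alpha}{q}+1}}{\frac{\alpha}{q}+1},
\]
and then translate the three resulting quantities into the weighted arithmetic, geometric and $p$-logarithmic means. First I would check the hypothesis: since $f'(x)=x^{\alpha/q}$, one has $\left\vert f'(x)\right\vert^{q}=x^{\alpha}$; because $x\mapsto x^{\alpha}$ is a nondecreasing $\alpha$-convex function in the first sense (the Example recorded above, with $s=\alpha$), Proposition~\ref{2.0}(a) shows it is harmonically $(\alpha,1)$-convex on $(0,\infty)$ for $0<\alpha<1$. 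Thus Corollary~\ref{c.3} applies with $\left\vert f'(a)\right\vert^{q}=a^{\alpha}$ and $\left\vert f'(b)\right\vert^{q}=b^{\alpha}$, which already reproduces the bracketed factor $\bigl[\lambda(\alpha,1;a,b)a^{\alpha}+\mu(\alpha,1;a,b)b^{\alpha}\bigr]^{1/q}$ on the right-hand side of the claim.

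Next I would evaluate the left-hand side of~(\ref{c-3}) for this $f$. Writing $\frac{\alpha}{q}+1=\frac{\alpha+q}{q}$, a direct computation gives
\[
\frac{f(a)+f(b)}{2}=\frac{q}{\alpha+q}\,A\!\left(a^{\frac{\alpha}{q}+1},b^{\frac{\alpha}{q}+1}\right),
\]
while the integral term, using $\int_{a}^{b}x^{\frac{\alpha}{q}-1}\,dx=\frac{q}{\alpha}\bigl(b^{\alpha/q}-a^{\alpha/q}\bigr)$ together with the definition of the $p$-logarithmic mean at $p=\frac{\alpha}{q}-1$ (so that $p+1=\frac{\alpha}{q}$) and $G^{2}=ab$, becomes
\[
\frac{ab}{b-a}\int_{a}^{b}\frac{f(x)}{x^{2}}\,dx=\frac{q}{\alpha+q}\,G^{2}L_{\frac{\alpha}{q}-1}^{\frac{\alpha}{q}-1}.
\]
Since both pieces carry the same positive factor $\frac{q}{\alpha+q}$, it factors out of the absolute value, and the left-hand side of~(\ref{c-3}) equals $\frac{q}{\alpha+q}\bigl\vert A(a^{\frac{\alpha}{q}+1},b^{\frac{\alpha}{q}+1})-G^{2}L_{\frac{\alpha}{q}-1}^{\frac{\alpha}{q}-1}\bigr\vert$.

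To finish I would multiply the inequality furnished by Corollary~\ref{c.3} through by $\frac{\alpha+q}{q}$; the prefactor $\frac{\alpha+q}{q}\cdot\frac{ab(b-a)}{2}$ collapses to $\frac{ab(b-a)(\alpha+q)}{2q}$ and the stated bound follows verbatim. No new inequality is required: everything reduces to Corollary~\ref{c.3} plus the defining identities for $A$, $G$ and $L_{p}$. The only genuinely delicate point is bookkeeping rather than analysis, namely keeping the common factor $\frac{q}{\alpha+q}$ consistent across the two pieces of the left-hand side so that it cancels cleanly, and matching the exponent convention $p+1=\frac{\alpha}{q}$ of the $p$-logarithmic mean against the power $x^{\frac{\alpha}{q}-1}$ that appears once $f(x)$ is divided by $x^{2}$.
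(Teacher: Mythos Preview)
Your proposal is correct and follows exactly the route taken in the paper: specialize Corollary~\ref{c.3} to $f(x)=x^{\frac{\alpha}{q}+1}/\bigl(\frac{\alpha}{q}+1\bigr)$ and rewrite both sides in terms of the means. The paper records this as a one-line proof, while you have additionally spelled out the hypothesis check via the Example/Proposition~\ref{2.0} and the $\frac{q}{\alpha+q}$ bookkeeping, all of which is accurate.
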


\begin{proof}
The assertion follows from the inequality (\ref{c-3}) in Corollary \ref{c.3}%
, for $f:\left( 0,\infty \right) \rightarrow 
%TCIMACRO{\U{211d} }%
%BeginExpansion
\mathbb{R}
%EndExpansion
,\ f(x)=x^{\frac{\alpha }{q}+1}/\left( \frac{\alpha }{q}+1\right) .$
\end{proof}

\begin{proposition}
Let $0<a<b$, $q>1$, $1/p+1/q=1$ and $0<\alpha <1.$ Then we have the
following inequality%
\begin{equation*}
\left\vert A\left( a^{\frac{\alpha }{q}+1},b^{\frac{\alpha }{q}+1}\right)
-G^{2}L_{\frac{\alpha }{q}-1}^{\frac{\alpha }{q}-1}\right\vert
\end{equation*}%
\begin{equation*}
\leq \frac{ab\left( b-a\right) (\alpha +q)}{2q}\left( \frac{1}{p+1}\right) ^{%
\frac{1}{p}}\left( \nu (\alpha ,q;a,b)a^{\alpha }+(\nu (0,q;a,b)-\nu (\alpha
,q;a,b))b^{\alpha }\right) ^{\frac{1}{q}}.
\end{equation*}
\end{proposition}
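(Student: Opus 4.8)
The plan is to follow verbatim the recipe of the three preceding propositions, differing only in that I invoke Corollary \ref{c.4} (the H\"older-type estimate, i.e.\ the $m=1$ case of Theorem \ref{2.5}) in place of Corollaries \ref{c.1}--\ref{c.3}. Concretely, I would apply Corollary \ref{c.4} to the power function $f:(0,\infty)\to\mathbb{R}$, $f(x)=x^{\frac{\alpha}{q}+1}/\bigl(\frac{\alpha}{q}+1\bigr)$, which is the same test function used in the two previous propositions. Then $f'(x)=x^{\alpha/q}$, so $\lvert f'(x)\rvert^{q}=x^{\alpha}$; since $0<\alpha<1$, the Example following Proposition \ref{2.0} shows that $x\mapsto x^{\alpha}$ is harmonically $(\alpha,1)$-convex on $[a,b]$, so the hypotheses of Corollary \ref{c.4} hold with $m=1$. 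With these values the right-hand side of \eqref{c-4} already reads $\frac{ab(b-a)}{2}\bigl(\frac{1}{p+1}\bigr)^{1/p}\bigl(\nu(\alpha,q;a,b)a^{\alpha}+(\nu(0,q;a,b)-\nu(\alpha,q;a,b))b^{\alpha}\bigr)^{1/q}$, i.e.\ the claimed bound apart from the scalar $\frac{\alpha+q}{q}$.

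Next I would evaluate the left-hand side of \eqref{c-4} for this $f$. Setting $s=\frac{\alpha}{q}+1=\frac{\alpha+q}{q}$, one has $\frac{f(a)+f(b)}{2}=\frac{1}{s}\cdot\frac{a^{s}+b^{s}}{2}=\frac{1}{s}A\bigl(a^{s},b^{s}\bigr)$. For the integral term, $\frac{ab}{b-a}\int_{a}^{b}\frac{f(x)}{x^{2}}\,dx=\frac{ab}{s(b-a)}\int_{a}^{b}x^{s-2}\,dx=\frac{1}{s}\,ab\,\frac{b^{s-1}-a^{s-1}}{(s-1)(b-a)}$, where the antiderivative $x^{s-1}/(s-1)$ is legitimate because $s-1=\alpha/q\neq 0$. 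Recognizing $ab=G^{2}$ and, with $r=\frac{\alpha}{q}-1=s-2$, $L_{r}^{r}=\frac{b^{r+1}-a^{r+1}}{(r+1)(b-a)}=\frac{b^{s-1}-a^{s-1}}{(s-1)(b-a)}$, the integral term equals $\frac{1}{s}G^{2}L_{\frac{\alpha}{q}-1}^{\frac{\alpha}{q}-1}$. Hence the entire left-hand side of \eqref{c-4} collapses to $\frac{1}{s}\bigl\lvert A(a^{s},b^{s})-G^{2}L_{\frac{\alpha}{q}-1}^{\frac{\alpha}{q}-1}\bigr\rvert$.

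Finally I would multiply the inequality of Corollary \ref{c.4} through by $s=\frac{\alpha+q}{q}$. On the left this produces exactly $\bigl\lvert A(a^{\frac{\alpha}{q}+1},b^{\frac{\alpha}{q}+1})-G^{2}L_{\frac{\alpha}{q}-1}^{\frac{\alpha}{q}-1}\bigr\rvert$, while on the right it converts the prefactor $\frac{ab(b-a)}{2}$ into $\frac{ab(b-a)(\alpha+q)}{2q}$, yielding the stated inequality. I anticipate no real obstacle: the argument is a direct specialization, and the only points demanding care are the routine exponent bookkeeping (verifying $\lvert f'\rvert^{q}=x^{\alpha}$ and matching the logarithmic-mean index $r=\frac{\alpha}{q}-1$, which is admissible since $\alpha>0$ and $\alpha\neq q$) together with the citation of the Example to certify that $x^{\alpha}$ is harmonically $(\alpha,1)$-convex, which is what makes $f$ an admissible test function.
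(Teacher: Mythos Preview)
Your proposal is correct and follows exactly the paper's approach: the paper's proof is the one-line remark that the assertion follows from inequality~\eqref{c-4} in Corollary~\ref{c.4} applied to $f(x)=x^{\frac{\alpha}{q}+1}/\bigl(\frac{\alpha}{q}+1\bigr)$. You have simply (and correctly) fleshed out the computations behind that sentence.
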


\begin{proof}
The assertion follows from the inequality (\ref{c-4}) in Corollary \ref{c.4}%
, for $f:\left( 0,\infty \right) \rightarrow 
%TCIMACRO{\U{211d} }%
%BeginExpansion
\mathbb{R}
%EndExpansion
,\ f(x)=x^{\frac{\alpha }{q}+1}/\left( \frac{\alpha }{q}+1\right) .$
\end{proof}


\begin{thebibliography}{99}
\bibitem{AS65} Abramowitz M. and Stegun I.A. (Eds.) \textit{Handbook of
Mathematical Functions with Formulas, Graphs, and Mathematical Tables}
(Dover, New York, 1965).

\bibitem{BOP08} Bakula, M.K., Ozdemir, M. E. and Pecaric, J. \textit{%
Hadamard type inequalities for }$m$\textit{-convex and }$(\alpha ,m)$\textit{%
-convex functions}, J. Inequal. Pure Appl. Math.\textbf{\ 9} (4), Article
96, p. 12, 2008.

\bibitem{HM94} Hudzik, H. and Maligranda, L. \textit{Some remarks on }$s$%
\textit{-convex functions}, Aequationes Math. \textbf{48}, 100-111, 1994.

\bibitem{I13} \.{I}\c{s}can, \.{I}. \textit{A new generalization of some
integral inequalities for }$(\alpha ,m)$\textit{-convex functions},
Mathematical Sciences \textbf{7 }(22), 1-8, 2013.

\bibitem{I13b} \.{I}\c{s}can, \.{I}. \textit{New estimates on generalization
of some integral inequalities for }$(\alpha ,m)$\textit{-convex functions},
Contemp. Anal. Appl. Math. \textbf{1 }(2), 253-264, 2013.

\bibitem{I13c} \.{I}\c{s}can, \.{I}. \textit{Hermite-Hadamard type
inequalities for functions whose derivatives are }$(\alpha ,m)$\textit{%
-convex}, International Journal of Engineering and Applied sciences \textbf{2%
} (3), 69-78, 2013.

\bibitem{I13d} \.{I}\c{s}can, \.{I}. \textit{Hermite-Hadamard type
inequalities for harmonically convex functions}, Hacet. J. Math. Stat.,
accepted.

\bibitem{M93} Mihe\c{s}an, V. G.\textit{\ A generalization of the convexity}%
, Seminer on Functional Equations, Approximation and Convexity, Cluj-Napoca,
Romania,1993.

\bibitem{OAK11} Ozdemir, M. E., Avc\i ,\ M. and Kavurmac\i ,\ H. \textit{%
Hermite-Hadamard-type inequalities via }$(\alpha ,m)$\textit{-convexity},\
Comput. Math. Appl. \textbf{61,} 2614-2620, 2011.

\bibitem{OKS10} Ozdemir, M. E., Kavurmac\i , H. and Set,\ E. \textit{%
Ostrowski's type inequalities for }$(\alpha ,m)$\textit{-convex functions},
Kyungpook Math. J.\ \textbf{50}, 371-378, 2010.

\bibitem{OSS11} Ozdemir, M. E., Set, E. and Sar\i kaya, M. Z. \textit{Some
new Hadamard's type inequalities for co-ordinated }$m$\textit{-convex and }$%
(\alpha ,m)$\textit{-convex functions}, Hacet. J. Math. Stat. \textbf{40}
(2), 219-229, 2011.

\bibitem{SSOR09} Set, E., Sardari, M., Ozdemir, M. E. and Rooin, J. \textit{%
On generalizations of the Hadamard inequality for }$(\alpha ,m)$\textit{%
-convex functions,} RGMIA Res. Rep. Coll. \textbf{12} (4), Article 4, 2009.
\end{thebibliography}
\end{document}